\newtheorem{theorem}{Theorem}[section]
\newtheorem{proposition}[theorem]{Proposition}
\newtheorem{corollary}[theorem]{Corollary}
\newtheorem{lemma}[theorem]{Lemma}
\newtheorem{definition}[theorem]{Definition}
\newtheorem{remark}[theorem]{Remark}
\newtheorem{acknowledgments}{Acknowledgments}
\newtheorem{proof}{Proof}
\begin{document}
\title{On quasiinvariants of $S_n$ of hook shape}
\author{T. Tsuchida}
\date{}

\maketitle

\begin{abstract}
{\rm 
Chalykh, Veselov and Feigin introduced the notions of quasiinvariants for Coxeter groups,
which is a generalization of invariants. 
In \cite{bandlow}, Bandlow and Musiker showed that for the symmetric group $S_n$ of order $n$,
the space of quasiinvariants has
a decomposition indexed by standard tableaux. 
They gave a description of basis for the components indexed by standard tableaux of shape $(n-1,1)$. 
In this paper, we generalize their results to a 
description of basis for the components indexed by standard tableaux of arbitrary hook shape.
}
\end{abstract}
\footnote{2000 Mathematics Subject Classification. 68R05 05E10}
\section{Introduction}
In \cite{Chalykh} and \cite{feigin}, Chalykh, Veselov and Feigin introduced the notions of $quasi$\\
$invariants$ for Coxeter groups, which is a generalization of invariants. 
For any Coxeter group, the quasiinvariants is defined by giving a multiplicity $m$ which is a map 
from the conjugacy classes of the corresponding group to non-negative integers.

In the case of $S_n$, the multiplicity is a constant function.
Take a non-negative integer $m$.
A polynomial $P\in \mathbb{Q}[x_1,x_2,\cdots ,x_n]$ is called 
a $m$-quasiinvariants if the difference
\begin{equation*}
\bigl(1-(i,j) \bigr) P(x_1,\cdots ,x_n)
\end{equation*}
is divisible by $(x_i-x_j)^{2m+1}$ for any transposition $(i,j)\in S_n$. 

The notion of quasiinvariants was first introduced
in the study of the quantum Calogero Moser system. 
In the case of $S_n$, this system is defined by the following differential operator 
(the generalized Calogero-Moser Hamiltonian):
\begin{equation*}
L_m=\sum_{i=1}^n\frac{\partial^2}{\partial x_i^2}-2m\sum_{1\leq i < j\leq n}
\frac{1}{x_i-x_j}\bigl( \frac{\partial}{\partial x_i} - \frac{\partial}{\partial x_j} \bigr)
\end{equation*}
where $m$ is a real number.

For a Coxeter group $G$, we denote by $S^G$ the ideal generated by homogeneous invariant polynomials for $G$
and by $S^G_+$ the ideal generated by the homogeneous invariant polynomials of positive degree.
For a generic multiplicity, there exist a isomorphism from $S^G$ 
to the ring of $G$-invariant quantum integrals of the generalized Calogero-Moser Hamiltonian 
(sometimes called Harish-Chandra isomorphism).
We denote by ${\cal L}_1,{\cal L}_2,\cdots ,{\cal L}_n$ 
the operators corresponding to fundamental invariant polynomials $\sigma_1,\sigma_2, \cdots ,\sigma_n$.
the generalized Calogero-Moser Hamiltonian is a member of this ring
(see for example  \cite{feigin}, \cite{feigin2}).

In the case of non-negative integer multiplicities, Chalykh and Veselov showed 
that there is a homomorphism from the ring of quasiinvariants to the commutative ring of 
differential operators whose coefficients are rational functions (see e.g. \cite{Chalykh}).
It is shown that the restriction of such homomorphism onto $S^G$ induces Harish-Chandra isomorphism. 
Therefore, in the case of non-negative integer multiplicities there are much more quantum integrals.

Let $m$ be a non-negative multiplicity.
In \cite{feigin}, Feigin and Veselov introduced the notions of $m$-harmonics which are defined 
as the solutions of the following system:
\begin{eqnarray*}
{\cal L}_1 \psi &=&0 \\
{\cal L}_2 \psi &=&0 \\
&\vdots& \\
{\cal L}_n \psi &=&0.
\end{eqnarray*}
Feigin and Veselov also showed that the solutions of such system are polynomials.
They also showed that the space of $m$-harmonic polynomials is a subspace of 
$m$-quasiinvariants of dimension $|G|$. 
In \cite{felder}, Felder and Veselov gave a formula of the Hilbert series of 
the space of $m$-harmonic polynomials.

In \cite{etingof}, Etingof and Ginzburg proved the following:
(i) quasiinvariants of $G$ is a free module over $S^G$ and 
the ring of quasiinvariants is Cohen-Macaulay and Gorenstein
(i\hspace{-.1em}i) there is a isomorphism from the dual space of the quotient of quasiinvariants by $S^G$ 
to the space of $m$-harmonic polynomials
(i\hspace{-.1em}i\hspace{-.1em}i) the Hilbert series of the quotient of the
quasiinvariants by $S^G_+$
is equal to that of $m$-harmonic polynomials.

Let $I_2(N)$ be the dihedral group of regular N-gon.
In \cite{feigin}, Feigin and Veselov considered quasiinvariants for $I_2(N)$ 
for a constant multiplicity. 
Since $I_2(N)$ is of rank $2$, quasiinvariants are expressed as essentially one variable.
Feigin and Veselov gave generators over $S^{I_2(N)}$ by a direct calculation. 
In \cite{feigin2}, Feigin studied quasiinvariants for $I_2(N)$ for any non-negative multiplicity.
He gave a free basis of quasiinvariants over $S^{I_2(N)}$
using the above mentioned results of Etingof and Ginzburg. 
An explicit description of basis of the quotient of quasiinvariants for $S_3$ 
is contained in \cite{feigin}. 
Another description is given in \cite{bandlow2}.
In \cite{felder}, for $S_n$ Felder and Veselov provide integral expressions 
for the lowest degree non-symmetric quasiinvariant polynomials (the degree $nm+1$).
However, for any integer $n\geq 4$ a basis of the quotient of quasiinvariants for $S_n$ is not known. 

In this paper, we consider quasiinvariants for $S_n$. 
In this case, $m$ is a non-negative integer.
We denote these quasiinvariants by ${\bf QI_m}$ and by $\Lambda_n$ the space of symmetric polynomials.
We define ${\bf QI_m}^*$ as the quotient of ${\bf QI_m}$ 
by the ideal generated by the homogeneous symmetric polynomials of positive degree.

In \cite{bandlow}, Bandlow and Musiker showed that ${\bf QI_m}$ has
a decomposition indexed by standard tableaux. 
Each component has a $\Lambda_n$ module structure. 
This decomposition can be extended to that of ${\bf QI_m}^*$.
They constructed explicit basis of the submodules of ${\bf QI_m}^*$ 
indexed by standard tableaux of shape $(n-1,1)$. 

In this paper, we extend the result in \cite{bandlow}.
We construct basis of the submodules of ${\bf QI_m}^*$ 
indexed by standard tableaux of shape $(n-k+1,1^{k-1})$ (a hook)
(see Theorem.\ref{basis:n-k+1}). 
The elements of our basis are expressed as determinants 
of a matrix with entries similar to elements of basis introduced in \cite{bandlow}.
We also show that our basis is a free basis of the submodule of ${\bf QI_m}$ 
indexed by a hook $(n-k+1,1^{k-1})$ over $\Lambda_n$ (Corollary.\ref{cor:n-k333}). 

We also show how the operator $L_m$
acts on our basis. In \cite{feigin}, it is proved that the operator $L_m$ preserves ${\bf QI_m}$.
In \cite{bandlow}, it is obtained explicit formulas of the action of $L_m$ on their basis. 
We extend this formulas to that of our basis (Theorem.\ref{thmLm}). 

\section{Preliminaries}
\subsection{Symmetric group and Young diagram}
We denote $\mathbb{Q}[x_1,x_2,\cdots ,x_n]$ by $K_n$ 
and the symmetric group on $\{1,2,\cdots ,n\}$ by $S_n$. 
For the finite set $X$, we denote the symmetric group on $X$ by $S_X$.

The symmetric group $S_n$ acts on $K_n$ by
\begin{equation*}
\sigma P(x_1,\cdots ,x_n)=P(x_{\sigma (1)},\cdots ,x_{\sigma (n)})\ \ \ \sigma \in S_n.
\end{equation*}
A polynomial $P(x_1,x_2,\cdots ,x_n)$ is called a symmetric polynomial when for any $\sigma \in S_n$ 
$P(x_1,x_2,\cdots ,x_n)$ satisfies
\begin{equation*}
\sigma P(x_1,\cdots ,x_n)=P(x_1,\cdots ,x_n).
\end{equation*}
We denote by $\Lambda_n$ the subspace spanned by symmetric polynomials 
and by $\Lambda_n^d$ the subspace of $\Lambda_n$ spanned by homogeneous polynomials of degree $d$. 
We set $\Lambda_n^d=\{ 0 \}$ if $d<0$.
The $i$-th elementary symmetric polynomial is denoted by $e_i$.
For a partition $\nu=(\nu_1,\nu_2,\cdots)$, we define $e_{\nu}=\prod_{i}e_{\nu_i}$.
A basis of $\Lambda_n$ is given by $\{ e_{\nu} \}$. 

The group ring $S_n$ over $\mathbb{Q}$ is denoted by $\mathbb{Q}S_n$.
The action of $S_n$ on $K_n$ is naturally extended to that of $\mathbb{Q}S_n$.
We define the elements of $\mathbb{Q}S_n$.
For a subgroup $H$ of $S_n$, 
we define $[H],[H]'$ by
\begin{gather*}
[ H ] = \sum_{\sigma \in H} \sigma \\
[ H ]' = \sum_{\sigma \in H} sgn(\sigma) \sigma . 
\end{gather*}

Let $\lambda=(\lambda_1, \lambda_2,\cdots)$ be a partition. 
When $\lambda$ is a partition of a positive integer $n$, we denote this by $\lambda \vdash n$.
We define $l(\lambda)=\sharp \{ i| \lambda_i\neq 0 \}$ and $|\lambda|=\sum_{i}\lambda_i$. 
They are called the length and the size of $\lambda$ respectively. 

For a partition $\lambda$, the Young diagram of shape $\lambda$ 
is a diagram such that its $i$-th row has $\lambda_i$ boxes 
and it is arranged in left-justified rows. 
For example, the Young diagram of shape $(4,3,1)$ is 
\begin{equation*}
\begin{Young}
&&& \cr
&& \cr
 \cr
\end{Young}
.
\end{equation*}

We denote by $(i,j)$ a box on the $(i,j)$-th position of the diagram. 
For instance, the box $(2,3)$ of the Young diagram of shape $(4,3,1)$ is 
\begin{equation*}
\begin{Young}
&&& \cr
&&$\bullet$ \cr
 \cr
\end{Young}
.
\end{equation*}
We identify Young diagram of shape $\lambda$ with a partition $\lambda$. 

Let $k$, $n$ be integers such that $k\geq 2$ and $n\geq k$.
We define $\eta(n,k)=(n-k+1,1^{k-1})$. We have $l(\eta(n,k))=k$ and $|\eta(n,k)|=n$.
We call $\eta(n,k)$ (also the tableau of shape $\eta(n,k)$) the hook.

For $\lambda \vdash n$, we define the arm length $a(i,j)$ for box $(i,j)\in\lambda$ as 
\begin{equation*}
a(i,j)=\sharp \{ (i,l)\mid j<l,\ (i,l)\in \lambda \}.
\end{equation*}
We also define the leg length $l(i,j)$ for box $(i,j)$ as 
\begin{equation*}
l(i,j)=\sharp \{ (k,j)\mid i<k,\ (k,j)\in \lambda \}.
\end{equation*} 
We define $h(i,j)=a(i,j)+l(i,j)+1$ called the hook length for box $(i,j)\in\lambda$.

A $tableau$ of shape $\lambda$ is a diagram filled in each box of $\lambda$ with a positive integer. 
In this paper, we assume that entries of boxes are different each other. 
For a tableau $D$, we denote by $D_{i,j}$ the entry in the box $(i,j)$ of $D$. 
We define
\begin{equation*}
mem(D)=\{ D_{i,j} \mid (i,j)\in \lambda \}.
\end{equation*}

A tableau $T$ is called a standard tableau 
if $T$ satisfies $mem(T)=\{ 1,2,\cdots ,n \}$ and 
\begin{equation*}
T_{i,j}<T_{k,j},\ T_{i,j}<T_{i,l}\ \ \ i<k,j<l.
\end{equation*}
We denote by $ST(\lambda)$ the set of all standard tableaux of shape $\lambda$ 
and by $ST(n)$ the set of all standard tableaux with $n$ boxes.
 
For a tableau  $D$ of shape $\lambda$, we define
\begin{gather*}
C(D)=[ \{ \sigma\in S_{mem(D)}\mid \sigma\ preserves\ each\ column\ of\ D \} ]' \\
R(D)=[ \{ \sigma \in S_{mem(D)} \mid \sigma\ preserves\ each\ row\ of\ D \} ] \\
f_{\lambda} = |ST(\lambda)| \\
\gamma_D =\frac{f_{\lambda} C(D)R(D)}{n!} \\
V_D =\prod_{(i,j)\in C_D} (x_i-x_j)
\end{gather*}
where $C_D=\{ (i,j) \mid i<j$ and $i,j$ are the entries in same column of $D \}$.
The element $\gamma_D \in\mathbb{Q}S_{mem(D)}$ satisfies $\gamma_D^2=\gamma_D$.

\begin{definition}
{\rm
Let $s_1, s_2, \cdots ,s_n $ be mutually distinct positive integers. \\ 
(1)
We denote by $D(s_1, s_2, \cdots ,s_k;s_1,s_{k+1},$ $\cdots ,s_n)$ the 
tableau of shape $\eta(n,k)$ such that the entries in the first column and in the first row are
$s_1, s_2, \cdots ,s_k$ and $s_1, s_{k+1},\cdots ,s_n$ in order, respectively. \\
(2)
A tableau $D(s_1, s_2, \cdots ,s_k;s_1,s_{k+1}, \cdots ,s_n)$ 
is a standard tableau of shape $\eta(n,k)$ if and only if the following holds:
\begin{eqnarray*}
s_1, s_2, \cdots ,s_n\ {\rm is\ a\ permutation\ of\ } 1,2,\cdots ,n \\
s_1=1, s_2\leq \cdots \leq s_k,\ 
s_{k+1}\leq \cdots \leq s_n.
\end{eqnarray*}
Then we simply write $D(s_1, s_2, \cdots ,s_k;s_1,s_{k+1}, \cdots ,s_n)$ as $T(1,s_2,\cdots ,s_k)$. \\
(3)Let $i$ be a integer such that $1\leq i\leq k$ (resp. $k+1\leq i\leq n$). 
We set $D=D(s_1, s_2, \cdots ,s_k;s_1,s_{k+1},$ $\cdots ,s_n)$.
We define 
\begin{eqnarray*}
&&D^{s_i}=D(s_1,\cdots ,s_{i-1},s_{i+1},\cdots ,s_k;s_1,s_{k+1}, \cdots ,s_n) \\
&&(resp.\  D^{s_i}=D(s_1,\cdots ,s_k;s_1,s_{k+1}, \cdots 
,s_{i-1},s_{i+1},\cdots ,s_n)).
\end{eqnarray*}
}
\end{definition}

For example, a standard tableau $T(1,3,4)=D(1,3,4;1,2,5,6)$ of shape $(4,1,1)$ is
\begin{equation*}
\begin{Young}
$1$&$2$&$5$&$6$ \cr
$3$ \cr
$4$ \cr
\end{Young}
.
\end{equation*}
The tableau $T(1,3,4)^1$ is 
\begin{equation*}
\begin{Young}
$3$&$2$&$5$&$6$ \cr
$4$ \cr
\end{Young}
\end{equation*}
and $T(1,3,4)^2$ is
\begin{equation*}
\begin{Young}
$1$&$5$&$6$ \cr
$3$ \cr
$4$ \cr
\end{Young}
.
\end{equation*}

We have the following propositions.

\begin{proposition}[\cite{bandlow}]
For any $f=\sum_{\sigma \in S_n} f_{\sigma} \sigma \in \mathbb{Q}S_n$, 
$P\in \Lambda_n$ and $Q \in K_n$, 
we have $f(PQ)=Pf(Q)$.
\end{proposition}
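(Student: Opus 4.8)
The plan is to reduce the statement to the case of a single permutation and then recover the general case by linearity of the action. The entire content rests on two elementary observations: that each $\sigma \in S_n$ acts on $K_n$ as a ring homomorphism (being a substitution of variables), and that a symmetric polynomial is, by its very definition, a fixed point of this action.

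First I would establish the multiplicativity of the action for a fixed $\sigma \in S_n$. Since $\sigma$ acts by the substitution $x_i \mapsto x_{\sigma(i)}$, for any two polynomials $A, B \in K_n$ we have
\begin{equation*}
\sigma(AB)(x_1,\cdots,x_n) = (AB)(x_{\sigma(1)},\cdots,x_{\sigma(n)}) = A(x_{\sigma(1)},\cdots,x_{\sigma(n)})\, B(x_{\sigma(1)},\cdots,x_{\sigma(n)}) = (\sigma A)(\sigma B).
\end{equation*}
Applying this with $A = P$ and $B = Q$ yields $\sigma(PQ) = (\sigma P)(\sigma Q)$. Now the hypothesis $P \in \Lambda_n$ enters: by the definition of a symmetric polynomial, $\sigma P = P$ for every $\sigma \in S_n$, so the identity collapses to the single-permutation statement $\sigma(PQ) = P\,(\sigma Q)$.

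Finally I would sum over $\sigma$ against the coefficients $f_\sigma$ and factor out the common $P$, which is legitimate because $P$ no longer depends on $\sigma$:
\begin{equation*}
f(PQ) = \sum_{\sigma \in S_n} f_\sigma\, \sigma(PQ) = \sum_{\sigma \in S_n} f_\sigma\, P\,(\sigma Q) = P \sum_{\sigma \in S_n} f_\sigma\,(\sigma Q) = P\, f(Q),
\end{equation*}
which is the asserted equality. There is no genuine obstacle in this argument; the only point deserving care is to keep the action of $\sigma$ on the \emph{product} $PQ$ distinct from its action on each factor separately, and it is precisely at this step that the multiplicativity of the substitution action together with the invariance $\sigma P = P$ do all the work.
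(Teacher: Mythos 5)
Your proof is correct: each $\sigma$ acts on $K_n$ as a ring homomorphism by substitution, symmetric polynomials are fixed points, and the general case follows by $\mathbb{Q}$-linearity. The paper itself states this proposition without proof, citing \cite{bandlow}, and your argument is precisely the standard one such a proof would take, so there is nothing to compare beyond noting that you have supplied the routine verification the paper omits.
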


\begin{proposition}[\cite{bandlow}]
Let $i_1,i_2,\cdots ,i_n$ be a permutation of ${1, 2, \cdots ,n}$. 
Then $[S_n]$ and $[S_n]'$ are expressed as follows:

\begin{gather*}
[S_n]=\bigl(1+ (i_1,i_n)+\cdots +(i_{n-1},i_n) \bigr) \cdots \bigl(1+(i_1,i_3)+(i_2,i_3) 
\bigr)\bigl(1+ (i_1,i_2) \bigr)\\
[S_n]'=\bigl(1- (i_1,i_n)-\cdots -(i_{n-1},i_n) \bigr) \cdots \bigl(1-(i_1,i_3)-(i_2,i_3) 
\bigr)\bigl(1- (i_1,i_2) \bigr).
\end{gather*}
\label{group ring}
\end{proposition}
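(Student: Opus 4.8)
The plan is to prove both identities simultaneously by induction on $n$, using the decomposition of $S_n$ into left cosets of the subgroup fixing the last symbol $i_n$.

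First I would set $H = S_{\{i_1, i_2, \ldots, i_{n-1}\}}$, regarded as the subgroup of $S_n$ consisting of the permutations that fix $i_n$. Since $H$ has index $n$, I need a complete set of left coset representatives, and the key observation is that two elements $g,g'$ lie in the same left coset $gH$ exactly when $g(i_n)=g'(i_n)$: indeed $g^{-1}g'\in H$ means $g^{-1}g'$ fixes $i_n$. Now the identity together with the transpositions $(i_1,i_n),(i_2,i_n),\ldots,(i_{n-1},i_n)$ send $i_n$ to $i_n,i_1,i_2,\ldots,i_{n-1}$, i.e.\ to all $n$ distinct symbols; hence they form a complete set of left coset representatives. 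This is the step I expect to require the most care, as it is precisely what makes the factorization work.

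Given this, I would write, letting $g$ range over the chosen representatives,
\begin{equation*}
[S_n] = \sum_{\sigma \in S_n} \sigma = \sum_{g}\sum_{h \in H} gh = \Bigl(\sum_{g} g\Bigr)[H] = \bigl(1 + (i_1,i_n) + \cdots + (i_{n-1},i_n)\bigr)[H].
\end{equation*}
Applying the induction hypothesis to $[H]$, which by definition is the sum over $S_{\{i_1,\ldots,i_{n-1}\}}$ and so factors as
\begin{equation*}
[H] = \bigl(1 + (i_1,i_{n-1}) + \cdots + (i_{n-2},i_{n-1})\bigr)\cdots\bigl(1+(i_1,i_3)+(i_2,i_3)\bigr)\bigl(1+(i_1,i_2)\bigr),
\end{equation*}
yields the asserted expression for $[S_n]$. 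The base case $n=2$ reads $[S_2]=1+(i_1,i_2)$, which is immediate.

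For the signed version I would use the identical coset decomposition while tracking signs. Because $sgn$ is multiplicative, $sgn(gh)=sgn(g)\,sgn(h)$, so the double sum factors as $\bigl(\sum_g sgn(g)\,g\bigr)[H]'$. The identity contributes sign $+1$ and each transposition $(i_j,i_n)$ contributes sign $-1$, producing the leading factor $1-(i_1,i_n)-\cdots-(i_{n-1},i_n)$; combining this with the inductive factorization of $[H]'$ gives the stated formula for $[S_n]'$. Thus both identities follow from the same coset argument, the only real content being the verification that the listed transpositions exhaust the left cosets of $H$.
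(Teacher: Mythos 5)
Your proof is correct, and the coset-decomposition induction you use (representatives $1,(i_1,i_n),\ldots,(i_{n-1},i_n)$ for the stabilizer of $i_n$, with multiplicativity of $sgn$ handling the signed case) is the standard argument for this factorization. The paper itself states Proposition \ref{group ring} without proof, citing \cite{bandlow}, so there is no internal proof to compare against; your argument fills that gap in essentially the expected way, with the only implicit point being that the inductive hypothesis is applied to the symmetric group on the symbol set $\{i_1,\ldots,i_{n-1}\}$ rather than on $\{1,\ldots,n-1\}$, which is harmless since the identity depends only on the symbols.
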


\subsection{The quasiinvariants for $S_n$}

We first recall results in \cite{bandlow}.

\begin{lemma}[\cite{bandlow}]
The quasiinvariants ${\bf QI_m}$ has following decomposition:
\begin{equation*}
{\bf QI_m}=\bigoplus_{T\in ST(n)}\gamma_{T}({\bf QI_m}).
\end{equation*}

$\gamma_{T}({\bf QI_m})$ has following description:

\begin{equation}
\gamma_{T}({\bf QI_m})=\gamma_{T}(K_n)\cap V_{T}^{2m+1}K_n.
\label{eq:gammaT}
\end{equation}
\label{basis:lemn-1}
\end{lemma}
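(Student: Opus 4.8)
The plan is to prove the two assertions separately: the direct–sum decomposition will follow from the module structure of ${\bf QI_m}$ together with the idempotent identity $\gamma_D^2=\gamma_D$, and the description \eqref{eq:gammaT} of each summand will be reduced to a divisibility statement for elements of $\gamma_T(K_n)$.

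First I would check that ${\bf QI_m}$ is a $\mathbb{Q}S_n$-submodule of $K_n$. If $P\in{\bf QI_m}$ and $\sigma\in S_n$, then for any transposition $(i,j)$
\[
\bigl(1-(i,j)\bigr)\sigma P=\sigma\bigl(1-(\sigma^{-1}(i),\sigma^{-1}(j))\bigr)P ,
\]
and since $\bigl(1-(\sigma^{-1}(i),\sigma^{-1}(j))\bigr)P$ is divisible by $(x_{\sigma^{-1}(i)}-x_{\sigma^{-1}(j)})^{2m+1}$, applying $\sigma$ shows $\bigl(1-(i,j)\bigr)\sigma P$ is divisible by $(x_i-x_j)^{2m+1}$; hence $\sigma P\in{\bf QI_m}$. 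Next I would use the representation theory of $S_n$ (Young's construction), by which $\{\gamma_T\mid T\in ST(n)\}$ is a complete system of orthogonal idempotents of $\mathbb{Q}S_n$ with $\gamma_T\gamma_{T'}=\delta_{T,T'}\gamma_T$ and $\sum_T\gamma_T=1$. This gives the Peirce decomposition $K_n=\bigoplus_T\gamma_T(K_n)$; applying $1=\sum_T\gamma_T$ to $P\in{\bf QI_m}$ and using that each $\gamma_T$ preserves ${\bf QI_m}$ yields ${\bf QI_m}=\sum_T\gamma_T({\bf QI_m})$, the sum being direct by orthogonality. This establishes the decomposition.

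For the description, the first step is the reduction
\[
\gamma_T({\bf QI_m})=\gamma_T(K_n)\cap{\bf QI_m},
\]
valid because $\gamma_T$ is idempotent (so $Q\in\gamma_T(K_n)$ forces $\gamma_TQ=Q$) and ${\bf QI_m}$ is a module. It then suffices to prove, for $Q\in\gamma_T(K_n)$, the equivalence $Q\in{\bf QI_m}\iff V_T^{2m+1}\mid Q$. Since $C(T)$ is a left factor of $\gamma_T$, every $Q\in\gamma_T(K_n)$ satisfies $\rho Q=\mathrm{sgn}(\rho)\,Q$ for every column-preserving $\rho$; in particular $(1-(i,j))Q=2Q$ for each $(i,j)\in C_T$. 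Thus, if $Q$ is a quasiinvariant, then $(x_i-x_j)^{2m+1}\mid Q$ for every $(i,j)\in C_T$, and since these linear factors are pairwise coprime their product to the power $2m+1$, namely $V_T^{2m+1}$, divides $Q$. This gives the implication ``$\Rightarrow$''.

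The reverse implication is the main obstacle. Assume $Q\in\gamma_T(K_n)$ and $V_T^{2m+1}\mid Q$; one must verify $(1-(i,j))Q\in(x_i-x_j)^{2m+1}K_n$ for every transposition. For same-column pairs this is immediate from the column-antisymmetry above together with the divisibility just used. The difficulty lies entirely in the pairs $(i,j)$ whose entries sit in different columns of $T$: there $(x_i-x_j)$ divides neither $V_T$ nor its image under $(i,j)$, so divisibility of $Q$ by $V_T^{2m+1}$ does not by itself force $(1-(i,j))Q$ to vanish to order $2m+1$. Indeed a polynomial that is merely column-antisymmetric and divisible by $V_T^{2m+1}$ need not be a quasiinvariant, so the finer fact that $Q$ actually lies in the image of $\gamma_T$ must be exploited. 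My plan here is to write $Q=C(T)S$ with $S=\tfrac{f_\lambda}{n!}R(T)Q'$ row-symmetric, to expand $C(T)$ and the relevant symmetrizers by Proposition \ref{group ring}, and to use the row-symmetry of $S$ to regroup the terms of $(1-(i,j))Q$ into column-antisymmetric packets; each packet is then matched, via a Garnir-type relation between the two columns containing $i$ and $j$, to an antisymmetrization in which $i$ and $j$ are co-columnar, where divisibility by $(x_i-x_j)^{2m+1}$ has already been established. Carrying out this regrouping cleanly and controlling the powers of $(x_i-x_j)$ it produces is where the real work of the proof concentrates.
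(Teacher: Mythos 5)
First, a point of comparison you could not have known: the paper does not prove this lemma at all --- it is quoted verbatim from Bandlow--Musiker \cite{bandlow}, so your attempt has to stand entirely on its own. It does not, for the reason you yourself flag: the inclusion $\gamma_T(K_n)\cap V_T^{2m+1}K_n\subseteq \gamma_T({\bf QI_m})$ is the substantive content of the lemma (it is the ``new characterization'' that is the point of \cite{bandlow}), and your proposal replaces it with an unexecuted plan. The plan is moreover doubtful as stated for the shapes at issue. Take $T$ a hook: every column except the first is a single box, so for a transposition $(i,j)$ with both entries in the arm there is no column antisymmetrization in which $i$ and $j$ are co-columnar, and no Garnir-type relation can produce one. (Those pairs happen to be harmless --- for a hook, $(i,j)$ commutes with $C(T)$ and fixes $R(T)$, so $(1-(i,j))Q=0$ --- but this shows the ``match each packet to a co-columnar antisymmetrization'' mechanism is not the right organizing principle.) The genuinely hard cases are the mixed pairs, e.g.\ $i$ in the leg and $j$ in the arm, where $Q$ is neither symmetric nor antisymmetric under $(i,j)$ and one must extract divisibility by $(x_i-x_j)^{2m+1}$, a factor dividing neither $V_T$ nor $(i,j)V_T$. ``Controlling the powers of $(x_i-x_j)$'' there is not a technical cleanup step; it is the theorem, and nothing in your sketch indicates how the required $2m+1$ vanishing orders would arise.

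A second, repairable error: you assert that the Young symmetrizers $\{\gamma_T\mid T\in ST(n)\}$ satisfy $\gamma_T\gamma_{T'}=\delta_{T,T'}\gamma_T$ and $\sum_T\gamma_T=1$. Neither holds in general: the natural Young idempotents for distinct standard tableaux of the same shape are not pairwise orthogonal (the standard counterexample is shape $(2,2)$, where one of the two products $\gamma_{T}\gamma_{T'}$, $\gamma_{T'}\gamma_{T}$ is nonzero), and the sum of the $\gamma_T$ is not the identity for $n\geq 4$; the orthogonality and completeness you want belong to Young's seminormal units, not to $\frac{f_\lambda}{n!}C(T)R(T)$. The conclusion $K_n=\bigoplus_{T\in ST(n)}\gamma_T(K_n)$ is nevertheless a classical fact, proved via the triangularity of the products $\gamma_S\gamma_T$ in a suitable order of tableaux together with a dimension count (or by citing the decomposition $\mathbb{Q}S_n=\bigoplus_T \gamma_T\,\mathbb{Q}S_n$), and your observation that ${\bf QI_m}$ is an $S_n$-submodule, together with idempotency of each $\gamma_T$, does then give ${\bf QI_m}=\bigoplus_T\gamma_T({\bf QI_m})$ and the reduction $\gamma_T({\bf QI_m})=\gamma_T(K_n)\cap{\bf QI_m}$. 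Your forward implication --- column antisymmetry of $Q\in\gamma_T(K_n)$, hence $(1-(i,j))Q=2Q$ for $(i,j)\in C_T$, hence $V_T^{2m+1}\mid Q$ by pairwise coprimality of the factors --- is correct. So: decomposition fixable, easy inclusion of (\ref{eq:gammaT}) correct, hard inclusion missing.
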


For $\lambda\vdash n$, the vector space $\displaystyle{\bigoplus_{T\in ST(\lambda)}\gamma_{T}({\bf QI_m})}$ is 
called the $\lambda -isotypic$ $component$ of ${\bf QI_m}$.

Let $K$ be a polynomial ring. 
We denote by $K[i]$ the subspace spanned by homogeneous polynomials of degree $i$ in $K$.
The Hilbert series of $K$ is defined as a formal power series 
$\displaystyle{\sum_{i=0}^{\infty}dim(K[i])t^i}$. We denote it by $H(K,t)$. 
 
We denote ${\bf QI_m}/\langle e_1,\cdots,e_n\rangle$ by ${\bf QI_m}^*$.
For $[f]\in {\bf QI_m}^*$, we define the degree of $[f]$ as the minimal degree in the class $[f]$.
In \cite{etingof} and \cite{felder},
the Hilbert series of ${\bf QI_m}^*$ 
is given by as follows:
\begin{theorem}[\cite{etingof}, \cite{felder}]
\begin{equation}
H({\bf QI_m}^*, t)=n!t^{mn(n-1)/2}\sum_{\lambda \vdash n}
\prod_{(i,j)\in \lambda}\prod_{k=1}^nt^{w(i,j;m)}
\frac{1-t^k}{h(i,j)(1-t^{h(i,j)})}
\end{equation}
where we set $w(i,j;m)=m(l(i,j)-a(i,j))+l(i,j)$.

In particular, for $T\in ST(\lambda)$ 
the Hilbert series of $\gamma_T({\bf QI_m}^*)$ is given as follows:
\begin{equation}
H(\gamma_T({\bf QI_m}^*); t)=t^{mn(n-1)/2}\prod_{(i,j)\in \lambda}\prod_{k=1}^nt^{w(i,j;m)}
\frac{1-t^k}{1-t^{h(i,j)}}.
\label{hilbert:gammaT}
\end{equation}
\end{theorem}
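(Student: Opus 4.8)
The plan is to deduce both identities from the graded $S_n$-module structure of the space of $m$-harmonic polynomials, for which I write ${\cal H}_m$. The engine is result (iii) of \cite{etingof}, $H({\bf QI_m}^*,t)=H({\cal H}_m,t)$, which I would upgrade from a numerical identity to a statement about isotypic components. Since ${\bf QI_m}$ is free over $\Lambda_n$ and ${\cal H}_m\subset{\bf QI_m}$ is an $S_n$-stable graded complement to the kernel of the projection ${\bf QI_m}\to{\bf QI_m}^*$, the restriction of that projection gives an isomorphism ${\cal H}_m\xrightarrow{\ \sim\ }{\bf QI_m}^*$ of graded $S_n$-modules. Consequently, for every $\lambda\vdash n$ the graded multiplicity of the irreducible $S^\lambda$ in ${\bf QI_m}^*$ equals its graded multiplicity $m_\lambda(t)$ in ${\cal H}_m$.

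The second ingredient is the explicit computation of \cite{felder}, which I would quote in its refined (per-irreducible) form,
\begin{equation*}
m_\lambda(t)=t^{mn(n-1)/2}\,t^{\sum_{(i,j)\in\lambda}w(i,j;m)}\,\frac{\prod_{k=1}^n(1-t^k)}{\prod_{(i,j)\in\lambda}(1-t^{h(i,j)})}.
\end{equation*}
For $m=0$ one has $w(i,j;0)=l(i,j)$, and $m_\lambda(t)$ reduces to the classical fake degree of $S^\lambda$ in the coinvariant algebra, a reassuring check; so are the extreme shapes $\lambda=(n)$, where the exponent collapses to $0$ and $m_\lambda(t)=1$, and $\lambda=(1^n)$, where the bottom degree equals $(2m+1)n(n-1)/2$, the degree of the $(2m+1)$-st power of the Vandermonde.

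To reach the stated form I would invoke Lemma \ref{basis:lemn-1}, which gives ${\bf QI_m}^*=\bigoplus_{T\in ST(n)}\gamma_T({\bf QI_m}^*)$ with each $\gamma_T$ a primitive idempotent. Fixing a shape $\lambda$, any two standard tableaux of shape $\lambda$ differ by a permutation $\sigma$, and since $S_n$ preserves ${\bf QI_m}$ degree by degree, multiplication by $\sigma$ carries $\gamma_T({\bf QI_m}^*)$ isomorphically onto $\gamma_{\sigma T}({\bf QI_m}^*)$; hence all $f_\lambda=|ST(\lambda)|$ summands with $T\in ST(\lambda)$ share one Hilbert series, each realizing a single graded copy of $S^\lambda$. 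Therefore $H(\gamma_T({\bf QI_m}^*),t)=m_\lambda(t)$, which is exactly \eqref{hilbert:gammaT}. Summing over all $T$ and grouping by shape yields $H({\bf QI_m}^*,t)=\sum_{\lambda\vdash n}f_\lambda\,m_\lambda(t)$, and substituting the hook length formula $f_\lambda=n!/\prod_{(i,j)\in\lambda}h(i,j)$ turns each $f_\lambda$ into $n!\prod_{(i,j)}h(i,j)^{-1}$, reproducing the first displayed identity.

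The main obstacle I anticipate is that the refined formula cannot be read off from the total Hilbert series alone: knowing only $\sum_\lambda f_\lambda\,m_\lambda(t)$ does not separate the contributions of distinct shapes, so the per-$\lambda$ graded character from \cite{felder} is genuinely needed, as is the graded-equivariant (not merely dimensional) strengthening of \cite{etingof} used in the first step. Care is also required in matching grading directions, since the harmonic correspondence in \cite{etingof} is phrased through a dual space; here the self-duality of the irreducibles of $S_n$ together with the Gorenstein symmetry of ${\bf QI_m}$ ensures that one recovers the stated $m_\lambda(t)$ rather than its degree-reversal.
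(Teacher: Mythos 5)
The paper contains no proof of this theorem at all: it is imported as a known result, with the citations \cite{etingof} and \cite{felder} standing in for the argument, so there is nothing internal to compare your proposal against. Judged on its own, your reconstruction is correct and is, as far as one can tell, exactly how the statement is meant to be justified. The assembly is sound: a primitive idempotent $\gamma_T$ attached to $\lambda$ satisfies $\dim \gamma_T M = [M:S^\lambda]$ degree by degree, so $H(\gamma_T({\bf QI_m}^*);t)$ is the graded multiplicity $m_\lambda(t)$; the relation $\sigma\gamma_T\sigma^{-1}=\gamma_{\sigma T}$ shows all tableaux of one shape give the same series; and $f_\lambda=n!/\prod_{(i,j)\in\lambda}h(i,j)$ converts the per-shape series into the summed identity (note the paper's displayed formulas, read literally with the nested $\prod_{(i,j)}\prod_{k}$, are typographically garbled, and your $m_\lambda(t)$ corresponds to the intended reading). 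Your key caveat is also exactly right: the total Hilbert series of ${\cal H}_m$ cannot be disaggregated by shape, so the per-irreducible graded character must be quoted from \cite{felder} --- and that refined statement is genuinely what Felder--Veselov prove (their paper computes the $S_n$-isotypic decomposition of the $m$-harmonic space, not merely its dimension series), so this is a legitimate citation rather than a gap. One simplification: your appeal to Gorenstein symmetry to control the grading direction is unnecessary, since the Etingof--Ginzburg identification of the dual of ${\bf QI_m}/S^G_+{\bf QI_m}$ with ${\cal H}_m$ comes from a pairing of the form $(p,q)\mapsto (p(\partial)q)(0)$, which pairs degree $d$ against degree $d$; the dualization is therefore degree-preserving, and self-duality of the irreducible $S_n$-modules already gives equality of graded multiplicities. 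Your checks at $m=0$ (fake degrees), $\lambda=(n)$, and $\lambda=(1^n)$ (bottom degree $(2m+1)n(n-1)/2$, the power of the Vandermonde) all confirm the normalization.
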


Let $s_1, s_2, \cdots ,s_n $ be mutually distinct positive integers. 
We set $D=D(s_1,s_2;s_1,s_3,\cdots ,s_n)$. 
We define the following polynomial in $\mathbb{Q}[x_{s_1},x_{s_2},\cdots ,x_{s_n}]$:
\begin{equation}
Q_{D}^{l;m}= \int_{x_{s_1}}^{x_{s_2}}t^l\prod_{i=1}^n(t-x_{s_i})^m dt.
\label{basis:n-1}
\end{equation}

Recall that we define $\eta(n,k)=(n-k+1,1^{k-1})$. 
In \cite{bandlow}, J.Bandlow and G.Musiker found an explicit 
basis of $\gamma_{T}({\bf QI_m}^*)$ when $T\in ST( \eta(n,2) )$. 
\begin{theorem}[\cite{bandlow}]
Let $T\in ST( \eta(n,2) )$. 
$\bigl\{ Q_{T}^{0;m}, Q_{T}^{1;m}, \cdots ,Q_{T}^{n-2;m} \bigr\}$ is 
a basis of $\gamma_{T}({\bf QI_m}^*)$. 
\label{basis:thn-1}
\end{theorem}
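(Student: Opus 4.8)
The plan is to prove the statement in three steps: first show that each $Q_T^{l;m}$ lies in $\gamma_T({\bf QI_m})$, then read off $\dim_{\mathbb Q}\gamma_T({\bf QI_m}^*)=n-1$ from the Hilbert series, and finally prove that the $n-1$ polynomials $Q_T^{0;m},\dots,Q_T^{n-2;m}$ are linearly independent in the quotient. Throughout I write $T=T(1,s_2)$, so that the first column of $T$ consists of $s_1=1$ and $s_2$, the first row is $s_1,s_3,\dots,s_n$, and $C_T=\{(s_1,s_2)\}$; hence $V_T=x_{s_1}-x_{s_2}$, $R(T)=[S_W]$ with $W=\{s_1,s_3,\dots,s_n\}$, and $C(T)=1-(s_1,s_2)$.

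For membership I would use the characterization \eqref{eq:gammaT}, namely $\gamma_T({\bf QI_m})=\gamma_T(K_n)\cap V_T^{2m+1}K_n$, and verify the two conditions separately. Divisibility by $V_T^{2m+1}$ follows from the substitution $t=x_{s_1}+(x_{s_2}-x_{s_1})u$: since the integrand carries the factor $(t-x_{s_1})^m(t-x_{s_2})^m$, this change of variables pulls out $(x_{s_2}-x_{s_1})^{2m+1}$ and leaves a polynomial, so $V_T^{2m+1}\mid Q_T^{l;m}$. For $\gamma_T$-invariance I would compute $\gamma_T Q_T^{l;m}$ directly: the integrand is symmetric in all the $x_{s_i}$, so $(s_1,s_2)$ reverses the limits and gives $(s_1,s_2)Q_T^{l;m}=-Q_T^{l;m}$, while each $\sigma\in S_W$ only replaces the lower limit $x_{s_1}$ by $x_{\sigma(s_1)}$. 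Using the additivity $\int_{x_w}^{x_{s_2}}-\int_{x_w}^{x_{s_1}}=\int_{x_{s_1}}^{x_{s_2}}$ one finds $C(T)R(T)Q_T^{l;m}=n\,(n-2)!\,Q_T^{l;m}$, and since $f_{\eta(n,2)}=n-1$ the normalization in $\gamma_T=f_\lambda C(T)R(T)/n!$ produces exactly the coefficient $1$. Thus $\gamma_T Q_T^{l;m}=Q_T^{l;m}$, and $Q_T^{l;m}\in\gamma_T({\bf QI_m})$.

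For the dimension I would substitute the hook lengths of $\eta(n,2)=(n-1,1)$, namely $n,\,n-2,\,n-3,\dots,1$ together with a single extra $1$, into \eqref{hilbert:gammaT}; the quotient $\prod_{k=1}^n(1-t^k)\big/\prod_{(i,j)}(1-t^{h(i,j)})$ telescopes to $1+t+\dots+t^{n-2}$, and the exponent $mn(n-1)/2+\sum_{(i,j)}w(i,j;m)$ collapses to $mn+1$. Hence $H(\gamma_T({\bf QI_m}^*);t)=t^{mn+1}+t^{mn+2}+\dots+t^{mn+n-1}$, so $\gamma_T({\bf QI_m}^*)$ has dimension $n-1$ and is one-dimensional in each degree $mn+1,\dots,mn+n-1$. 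Since $Q_T^{l;m}$ is homogeneous of degree $mn+l+1$, the degrees of $Q_T^{0;m},\dots,Q_T^{n-2;m}$ match these graded pieces exactly, so it remains only to prove linear independence.

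The crux is this independence, which I would deduce from independence over $\Lambda_n$. Suppose $\sum_l a_l Q_T^{l;m}=0$ with $a_l\in\Lambda_n$. Applying the transposition $(s_1,s_j)$ for each $j$ (which fixes every $a_l$ and the symmetric integrand and merely replaces the lower limit $x_{s_1}$ by $x_{s_j}$) yields a system $M\mathbf a=0$, where $M$ is the $(n-1)\times(n-1)$ matrix with entries $\int_{x_w}^{x_{s_2}}t^l\prod_i(t-x_{s_i})^m\,dt$ indexed by $w\in W$ and $0\le l\le n-2$. Writing $F_l(z)=\int^z t^l\prod_i(t-x_{s_i})^m\,dt$, each entry equals $F_l(x_{s_2})-F_l(x_w)$; adjoining the row $x_{s_2}$ and the constant function $\phi_1=1$ as an extra column and then subtracting the $x_{s_2}$ row from the others identifies $\det M$, up to sign, with the $n\times n$ alternant $\det\bigl(\phi_j(x_{s_i})\bigr)_{1\le i,j\le n}$ of $\phi_1=1,\ \phi_{l+2}=F_l$ $(0\le l\le n-2)$ evaluated at the $n$ points $x_{s_1},\dots,x_{s_n}$. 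These $n$ polynomials in $z$ have the distinct degrees $0,mn+1,\dots,mn+n-1$, hence are linearly independent, so the alternant is a nonzero polynomial, as one checks by specializing the $x_{s_i}$ to distinct generic constants. Therefore $\det M\neq 0$, forcing $a_l=0$ for all $l$. Finally, by the freeness of ${\bf QI_m}$ over $\Lambda_n$ \cite{etingof} the summand $\gamma_T({\bf QI_m})$ is itself a free $\Lambda_n$-module, and comparing Hilbert series shows that the free submodule $\sum_l\Lambda_n Q_T^{l;m}$ already exhausts $\gamma_T({\bf QI_m})$; reducing modulo $\langle e_1,\dots,e_n\rangle$ then gives that $\{Q_T^{0;m},\dots,Q_T^{n-2;m}\}$ is a basis of $\gamma_T({\bf QI_m}^*)$. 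The one genuinely nontrivial point here is the nonvanishing of $\det M$, i.e.\ the linear independence of the shifted integrals; everything else is bookkeeping resting on \eqref{eq:gammaT}, \eqref{hilbert:gammaT}, and the Etingof--Ginzburg freeness.
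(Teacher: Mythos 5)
Your overall architecture is sound and, in fact, mirrors the machinery this paper builds for general hooks: membership via the characterization (\ref{eq:gammaT}), the dimension count $H(\gamma_T({\bf QI_m}^*);t)=t^{mn+1}(1+t+\cdots+t^{n-2})$ from (\ref{hilbert:gammaT}), reduction of the theorem to linear independence of the $Q_T^{l;m}$ over $\Lambda_n$, and then freeness plus a Hilbert-series comparison to get spanning (this is exactly the pattern of Proposition~\ref{proposition:n-k+1}, the lemma giving (\ref{hilbetr:n-k-complete}), and Corollary~\ref{cor:n-k333}; note the paper itself does not prove the $k=2$ theorem but cites \cite{bandlow} for it). Your idempotent computation $C(T)R(T)Q_T^{l;m}=n\,(n-2)!\,Q_T^{l;m}$ with $f_{(n-1,1)}=n-1$ checks out, as do the hook-length and exponent computations.

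However, there is a genuine gap at precisely the step you identify as the crux: the nonvanishing of $\det M$. Your justification --- the $n$ functions $1,F_0,\dots,F_{n-2}$ have distinct degrees $0,mn+1,\dots,mn+n-1$, hence are linearly independent, hence the alternant is nonzero ``by specializing the $x_{s_i}$ to distinct generic constants'' --- is a non sequitur, because the evaluation points are the \emph{same} variables that occur as parameters inside the $F_l$. Linear independence guarantees that \emph{some} choice of evaluation points makes $\det\bigl(\phi_j(y_i)\bigr)$ nonsingular, not that the built-in points $y_i=x_{s_i}$ do: with $\phi_1=1$ and $\phi_2(z)=\prod_{i=1}^n(z-x_{s_i})$, the functions have distinct degrees yet $\det\bigl(\phi_j(x_{s_i})\bigr)$ vanishes at \emph{every} specialization. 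So the nonvanishing must exploit the specific structure of the integrals, and it does hold, with two possible repairs. One is the leading-coefficient induction on $n$ that the paper itself runs in Proposition~\ref{proposition:n-k+1}: by Proposition~\ref{property:n-1}(2),(3), the row $w=s_1$ of $M$ has entries of $x_{s_1}$-degree $nm+l+1$ while every other entry has $x_{s_1}$-degree $m$, so the top power of $x_{s_1}$ in $\det M$ arises only from the term selecting row $s_1$ in column $l=n-2$, and its coefficient is a nonzero constant times the analogous $(n-2)\times(n-2)$ determinant in the variables $x_{s_2},\dots,x_{s_n}$; induct, with the $1\times 1$ case nonzero. Alternatively, specialize the $x_{s_i}$ to distinct reals and argue \`a la Rolle: a vanishing column combination yields a nonzero $p$ with $\deg p\leq n-2$ such that $h(y)=\int_y^{x_{s_2}}p(t)\prod_i(t-x_{s_i})^m\,dt$ vanishes at all $n$ nodes, so $h'=-p(y)\prod_i(y-x_{s_i})^m$ has $n-1$ interior zeros distinct from the nodes, forcing $p$ to have $n-1$ roots, a contradiction. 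With either repair, the rest of your argument (Cramer to kill the $a_l$, then Etingof--Ginzburg freeness and the Hilbert-series match $H(N)=H(\Lambda_n)\sum_{l=0}^{n-2}t^{mn+l+1}=H(\gamma_T({\bf QI_m}))$ to conclude spanning and hence a basis of the quotient) goes through.
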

\begin{remark}
{\rm In \cite{bandlow}, it is shown that $Q_{T}^{l;m}$ is divisible by $V_T=(x_1-x_j)^{2m+1}$.
We can similarly show that $Q_{D}^{l;m}$ is divisible by
$V_D=(x_{s_1}-x_{s_2})^{2m+1}$.
}
\label{remark:VTn-1}
\end{remark}

Let $f\in \mathbb{Q}[x_{s_1},x_{s_2},\cdots ,x_{s_n}]$. We denote by $deg_{x_{s_i}}(f)$ the degree of $f$ 
as the polynomial in $x_{s_i}$.
For a homogeneous polynomial $g$, we define $deg(g)$ as the degree of $g$.

The polynomials $Q_{D}^{l;m}$ have the following properties, 
which we will use to to show Proposition.\ref{property:n-k}.  
\begin{proposition}
Let $s_1, s_2, \cdots ,s_n $ be mutually distinct positive integers.
Let $l$ be a non-negative integer and 
take a tableau $D=D(s_1,s_2;s_1,s_3,\cdots ,s_n)$ of shape $\eta(n,2)$.
 
$\displaystyle{Q_{D}^{l;m}}$ is a homogeneous polynomial of degree $nm+l+1$ and satisfies following 
properties. \\
{\rm(1)}
$\displaystyle{Q_{D}^{l;m}}$ is symmetric in $x_{s_3},\cdots ,x_{s_n}$ 
and anti-symmetric in $x_{s_1},x_{s_2}$. \\
{\rm(2)}
We have $\displaystyle{deg_{x_{s_1}}(Q_{D}^{l;m}})=nm+l+1$. The leading coefficient 
of $\displaystyle{Q_{D}^{l;m}}$ in $x_{s_1}$ is 
$\displaystyle{\frac{(-1)^{m+1}m!}{\prod_{s=0}^m(mn+l+1-s)}}$. \\
{\rm(3)}
Let $i\in \{ 1, \cdots ,n \} \backslash \{1,2\}$. 
We have $\displaystyle{deg_{x_{s_i}}(Q_{D}^{l;m}})=m$.
The leading coefficient of $\displaystyle{Q_{D}^{l;m}}$ in $x_{s_i}$ 
is equal to $\displaystyle{(-1)^mQ_{D^{s_i}}^{l;m}}$. 

\label{property:n-1}
\end{proposition}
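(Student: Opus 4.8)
The plan is to work directly from the integral representation (\ref{basis:n-1}), reducing every claim to elementary manipulations of the integrand $t^l\prod_{i=1}^n(t-x_{s_i})^m$ together with a single Beta integral. First I would record that $Q_D^{l;m}$ is genuinely a polynomial: the integrand is a polynomial in $t$ whose coefficients lie in $\mathbb{Q}[x_{s_1},\dots,x_{s_n}]$, so if $\tilde G(t)$ denotes the antiderivative in $t$ with zero constant term, then $Q_D^{l;m}=\tilde G(x_{s_2})-\tilde G(x_{s_1})$ is a polynomial in the $x$'s. Homogeneity of degree $nm+l+1$ follows from the substitution $t=\lambda u$, which turns $\int_{\lambda x_{s_1}}^{\lambda x_{s_2}}t^l\prod_i(t-\lambda x_{s_i})^m\,dt$ into $\lambda^{nm+l+1}Q_D^{l;m}$. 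Claim (1) is then immediate from the structure of the integrand: the factor $\prod_{i=3}^n(t-x_{s_i})^m$ is symmetric in $x_{s_3},\dots,x_{s_n}$ and the limits do not involve these variables, while swapping $x_{s_1}\leftrightarrow x_{s_2}$ fixes the integrand but exchanges the two limits, producing an overall sign.

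For (2), since $Q_D^{l;m}$ is homogeneous of degree $N:=nm+l+1$, the coefficient of the pure power $x_{s_1}^{N}$ is a constant, and it can be isolated by setting $x_{s_2}=\cdots=x_{s_n}=0$, which kills every monomial except $x_{s_1}^{N}$. Under this specialization the integral collapses to $\int_{x_{s_1}}^{0}t^{\,l+m(n-1)}(t-x_{s_1})^m\,dt$, and the substitution $t=x_{s_1}u$ factors out $x_{s_1}^{N}$ and leaves the constant $\int_1^0 u^{\,l+m(n-1)}(u-1)^m\,du$. Evaluating this via the Beta integral $\int_0^1 u^a(1-u)^m\,du=a!\,m!/(a+m+1)!$ with $a=l+m(n-1)$, and using $a+m+1=N$, should reproduce exactly $(-1)^{m+1}m!/\prod_{s=0}^m(mn+l+1-s)$. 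As all factors here are positive, the coefficient is nonzero, so $\deg_{x_{s_1}}Q_D^{l;m}=N$.

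For (3), I would fix $i\geq 3$ and observe that $x_{s_i}$ enters only through the single factor $(t-x_{s_i})^m$, whose degree in $x_{s_i}$ is $m$, giving $\deg_{x_{s_i}}Q_D^{l;m}\leq m$ at once. The coefficient of $x_{s_i}^m$ in $(t-x_{s_i})^m$ is the constant $(-1)^m$, so the coefficient of $x_{s_i}^m$ in the entire integrand is $(-1)^m t^l\prod_{j\neq i}(t-x_{s_j})^m$; integrating between the ($x_{s_i}$-free) limits identifies this coefficient with $(-1)^m Q_{D^{s_i}}^{l;m}$, matching the definition of $D^{s_i}$ in which $s_i$ is deleted from the first row. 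Finally, $Q_{D^{s_i}}^{l;m}$ is a nonzero polynomial, since by the computation in (2) applied to the shape $\eta(n-1,2)$ its degree in $x_{s_1}$ is $(n-1)m+l+1>0$; hence the degree in $x_{s_i}$ is exactly $m$.

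I anticipate the only genuine bookkeeping obstacle is the closed form in (2): matching the Beta-integral value to the stated expression requires tracking the sign introduced by the reversed limits $\int_1^0$ and rewriting the factorial ratio $a!/(a+m+1)!$ as the descending product $\prod_{s=0}^m(N-s)^{-1}$. Everything else is a direct reading of the integrand, so this sign-and-factorial check is the single place where care is needed.
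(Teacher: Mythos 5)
Your proposal is correct, and parts (1) and (3) run essentially parallel to the paper's proof: (1) is the same observation about the symmetry of the integrand plus the sign from swapping the limits, and (3) is the same extraction of the coefficient of $x_{s_i}^m$ from the single factor $(t-x_{s_i})^m$ (your version is in fact cleaner, since the paper's displayed expansion there contains a typo, writing $Q_{T^i}^{l;m}$ where the coefficient of $x_i^s$ should be $Q_{T^i}^{l+m-s;m}$, and the paper does not spell out the nonvanishing of $Q_{D^{s_i}}^{l;m}$ needed to get the degree exactly $m$, which you supply via (2) in $n-1$ variables). For (2), however, you take a genuinely different route. The paper argues by induction on $m$, quoting from \cite{bandlow} the expansion $Q_{T}^{l;m}=\sum_{i=0}^n(-1)^i e_i Q_{T}^{n+l-i;m-1}$, locating the top-degree terms in $e_0Q_{T}^{n+l;m-1}-e_1Q_{T}^{n+l-1;m-1}$, and telescoping the two factorial ratios into $\frac{(-1)^{m+1}m!}{\prod_{s=0}^m(mn+l+1-s)}$. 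You instead note that homogeneity (which you, unlike the paper, actually prove, via the scaling $t=\lambda u$) forces the coefficient of $x_{s_1}^{nm+l+1}$ to be a constant, isolate it by the specialization $x_{s_2}=\cdots=x_{s_n}=0$, and evaluate $\int_1^0 u^{a}(u-1)^m\,du$ with $a=l+m(n-1)$ as $(-1)^{m+1}\frac{a!\,m!}{(a+m+1)!}=\frac{(-1)^{m+1}m!}{\prod_{s=0}^m(mn+l+1-s)}$; the sign bookkeeping you flag works out, since $(u-1)^m=(-1)^m(1-u)^m$ and the reversed limits contribute one more sign. Your Beta-integral computation buys a closed form in a single stroke, with nonvanishing (hence the exact degree statement) immediate, and it avoids any dependence on the Bandlow--Musiker recursion; the paper's inductive argument, on the other hand, stays within the recursion machinery it reuses elsewhere and needs no specialization of variables. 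Both are complete proofs; yours is the more self-contained and elementary of the two.
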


\begin{proof}
{\rm 
We show the case $D=T(1,2)$ since the proofs of other cases are similar. 
We set $T=T(1,2)$. \\
(1)
It follows from the fact that $t^l\prod_{i=1}^n(t-x_i)^m$ 
is symmetric in $x_1,x_2,\cdots ,x_n$. \\
(2)We show this statement by induction on $m$. 

When $m=0$, $Q_{T}^{l;0}$ is $\frac{1}{l+1}(x_j^{l+1}-x_1^{l+1})$, the statement holds. 

When $m\geq 1$, assume that the statement holds for all numbers less than $m$. 
In \cite{bandlow}, the polynomial $Q_{T}^{l;m}$ is expressed as:
\begin{equation}
Q_{T}^{l;m}=\sum_{i=0}^n(-1)^i e_i Q_{T}^{n+l-i;m-1}.
\label{bandlowexpand}
\end{equation}

By induction assumption on $m$, we have $deg_{x_{s_1}}(Q_{T}^{n+l-i;m-1})=nm+l-i+1$. 
From (\ref{bandlowexpand}), we have $deg_{x_1}(Q_{T}^{l;m})=nm+l+1$ and 
the term with the highest degree are in 
$e_0Q_{T}^{n+l;m-1}-e_1Q_{T}^{n+l-1;m-1}$.
The leading coefficient of $Q_{T}^{l;m}$ in $x_1$ is 
\begin{eqnarray*}
&&\frac{(-1)^m(m-1)!}{\prod_{s=0}^{m-1}(mn+l+1-s)} 
-\frac{(-1)^m(m-1)!}{\prod_{s=0}^{m-1}(mn+l-s)} \\
&=&\frac{(-1)^{m+1}m!}{\prod_{s=0}^m(mn+l+1-s)}. 
\end{eqnarray*}
\\
(3)Expanding $(t-x_i)^m$ in $Q_{T}^{l;m}$, we have

\begin{equation*}
Q_{T}^{l;m}=\sum_{s=0}^m(-1)^s \binom{m}{s} Q_{T^i}^{l;m}x_i^s.
\end{equation*}

Thus propositions is proved. $\Box$}

\end{proof}

As a corollary of this proposition, we have $Q_{D}^{l;m}\neq 0$ 
when $D$ is a tableau of shape $\eta(n,2)$.

\section{A basis for the isotypic component of shape $(n-k+1,1^{k-1})$}
We give a basis for the $\eta(n,k)$-isotypic component.
Let $s_1, s_2, \cdots ,s_n $ be mutually distinct positive integers.  
Throughout this section, we set  $D=D(s_1,\cdots ,s_k;s_1,s_{k+1},\cdots ,s_n)$ and 
$T=T(1,2,\cdots ,k)$.

\begin{definition}
{\rm 
(1)Let $p$ be a non-negative integer. 
For $i,j$ such that $1\leq i<j \leq k$, we define a polynomial $R_{D;s_i,s_j}^{p;m}$ in 
$\mathbb{Q}[x_{s_1},x_{s_2},\cdots ,x_{s_n}]$ as
\begin{equation}
R_{D;s_i,s_j}^{p;m}=\int_{x_{s_i}}^{x_{s_j}}t^p\prod_{l=1}^n(t-x_{s_l})^m dt.
\end{equation} \\
(2)Let $k$ be an integer such that $k\geq 2$. Take a partition $\mu =(\mu_1,\mu_2,\cdots ,\mu_{k-1})$ 
such that $\mu_1>\mu_2>\cdots >\mu_{k-1} \geq 0$.
We define a polynomial $Q_{D}^{\mu;m}$ in $\mathbb{Q}[x_{s_1},x_{s_2},\cdots ,x_{s_n}]$ as follows:

\begin{equation}
Q_{D}^{\mu;m}=
\begin{vmatrix}
R_{D;s_1,s_2}^{\mu_1;m}     & R_{D;s_1,s_2}^{\mu_2;m}     & \cdots & R_{D;s_1,s_2}^{\mu_{k-1};m} \\
R_{D;s_2,s_3}^{\mu_1;m}     & R_{D;s_2,s_3}^{\mu_2;m}     & \cdots & R_{D;s_2,s_3}^{\mu_{k-1};m} \\
  \vdots               &         \vdots         & \ddots &        \vdots          \\
R_{D;s_{k-1},s_k}^{\mu_1;m}   & R_{D;s_{k-1},s_k}^{\mu_2;m}   & \cdots & R_{D;s_{k-1},s_k}^{\mu_{k-1};m}
\end{vmatrix} 
.
\label{basis:n-k}
\end{equation}
}
We denote the empty sequence by $\emptyset$. 
When $k=1$, $\mu$ is the empty sequence $\emptyset$. We set $Q_{D}^{\emptyset;m}=1$.
We simply write $Q_{D}^{m}$ as $Q_{D}^{\emptyset;m}$.
\end{definition}
\begin{remark}
{\rm
Setting $D'=D(s_1,s_2;s_1,s_3,\cdots ,s_n)$, 
we have $R_{D;s_1,s_2}^{p;m}=Q_{D'}^{p;m}$. 
}
\end{remark}

The polynomials $Q_{D}^{\mu;m}$ have the following properties, 
which we will use to show our main results.
\begin{proposition}
Let $s_1, s_2, \cdots ,s_n $ be mutually distinct positive integers.  
We set $D=D(s_1,\cdots ,s_k;s_1,s_{k+1},\cdots ,s_n)$.
Let $\mu =(\mu_1,\mu_2,\cdots ,\mu_{k-1})$ be a partition  
such that $\mu_1>\mu_2>\cdots >\mu_{k-1} \geq 0$. 

Then, $\displaystyle{Q_{D}^{\mu;m}}$ satisfies the following. \\
{\rm(1)}$\displaystyle{Q_{D}^{\mu;m}}$ is symmetric in $x_{s_{k+1}},x_{s_{k+2}},\cdots ,x_{s_n}$ 
and anti-symmetric in $x_{s_1},x_{s_2},\cdots$ $,x_{s_k}$.
In particular, $\displaystyle{Q_{D}^{\mu;m}}$  is divisible by $\displaystyle{V_D^{2m+1}}$. \\
{\rm(2)}We have $\displaystyle{deg_{x_{s_1}}(Q_{D}^{\mu;m}})=(n+k-2)m+\mu_1+1$.
The leading coefficient of $\displaystyle{Q_{D}^{\mu;m}}$ in $x_{s_1}$ is 
\begin{equation*} 
\frac{(-1)^{(k-1)m+1}m!}{\prod_{s=0}^m(mn+\mu_1+1-s)}
Q_{D^{s_1}}^{(\mu_2,\cdots ,\mu_{k-1});m}. 
\end{equation*}

In particular, we have $\displaystyle{deg(Q_{D}^{\mu;m}})=(k-1)nm+|\mu|+k-1$. \\
{\rm(3)}We have $\displaystyle{deg_{x_{k+1}}(Q_{D}^{\mu;m}})=(k-1)m$. 
The leading coefficient of $\displaystyle{Q_{D}^{\mu;m}}$ in $x_{k+1}$ is 
$\displaystyle{(-1)^{(k-1)m}Q_{D^{s_{k+1}}}^{\mu;m}}$. \\
{\rm(4)}Polynomial $\displaystyle{Q_{D}^{\mu;m}}$ is invariant under $\displaystyle{\gamma_D}$. 
\label{property:n-k}
\end{proposition}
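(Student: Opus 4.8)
The plan is to reduce everything to a single determinantal representation of $Q_D^{\mu;m}$ in which the column variables $x_{s_1},\dots,x_{s_k}$ enter only as evaluation points. Fix an antiderivative $g_c(y)=\int^{y}t^{\mu_c}\prod_{l=1}^n(t-x_{s_l})^m\,dt$, so that $R_{D;s_r,s_{r+1}}^{\mu_c;m}=g_c(x_{s_{r+1}})-g_c(x_{s_r})$, and set $v(y)=(1,g_1(y),\dots,g_{k-1}(y))$. Subtracting consecutive rows of the $k\times k$ matrix $N$ whose $r$-th row is $v(x_{s_r})$ and expanding along the all-ones first column shows $Q_D^{\mu;m}=\det N$. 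Because each entry of the $c$-th column of the defining matrix (\ref{basis:n-k}) is homogeneous of degree $nm+\mu_c+1$, the determinant is homogeneous of degree $\sum_{c}(nm+\mu_c+1)=(k-1)nm+|\mu|+k-1$, which already gives the total degree claimed in (2). The representation $Q_D^{\mu;m}=\det N$ is the workhorse for (1), (3) and (4).

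For (1), symmetry in $x_{s_{k+1}},\dots,x_{s_n}$ is immediate because $\prod_l(t-x_{s_l})^m$, hence every entry of (\ref{basis:n-k}), is symmetric in those variables. Antisymmetry in $x_{s_1},\dots,x_{s_k}$ is transparent from $\det N$: since the $g_c$ are symmetric in all variables, a permutation in $S_{\{s_1,\dots,s_k\}}$ merely permutes the rows of $N$ and contributes the sign. For the divisibility by $V_D^{2m+1}$ I would work with the original matrix (\ref{basis:n-k}): by Remark \ref{remark:VTn-1} every entry of its $r$-th row is divisible by $(x_{s_r}-x_{s_{r+1}})^{2m+1}$, so factoring that out of row $r$ shows the determinant is divisible by $(x_{s_r}-x_{s_{r+1}})^{2m+1}$ for each adjacent pair. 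Antisymmetry then transports this to every pair $(s_i,s_j)$, and since the $\binom{k}{2}$ linear forms are pairwise coprime we obtain divisibility by their product $V_D^{2m+1}$. Part (3) is handled the same way: $x_{s_{k+1}}$ occurs only inside $\prod_l(t-x_{s_l})^m$, so it appears to degree $m$ in each $g_c$ and hence in every non-trivial column of $N$; the determinant therefore has degree $(k-1)m$ in $x_{s_{k+1}}$, and extracting the coefficient of $x_{s_{k+1}}^m$ in each column (equal to $(-1)^m$ times the corresponding entry with $s_{k+1}$ deleted) yields the leading coefficient $(-1)^{(k-1)m}Q_{D^{s_{k+1}}}^{\mu;m}$.

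For (2) I would instead expand the original determinant (\ref{basis:n-k}) and track degrees in $x_{s_1}$ using Proposition \ref{property:n-1}: the first row has degree $nm+\mu_c+1$ in its $c$-th entry, while every entry of rows $2,\dots,k-1$ has degree exactly $m$. Thus a diagonal term has $x_{s_1}$-degree $(n+k-2)m+\mu_{\pi(1)}+1$, maximized uniquely by the terms with $\pi(1)=1$; summing their leading coefficients gives a factor $\frac{(-1)^{m+1}m!}{\prod_{s=0}^m(mn+\mu_1+1-s)}$ from Proposition \ref{property:n-1}(2) times the $(k-2)\times(k-2)$ determinant of the $x_{s_1}$-leading coefficients of the remaining entries, which by Proposition \ref{property:n-1}(3) is $(-1)^{(k-2)m}Q_{D^{s_1}}^{(\mu_2,\dots,\mu_{k-1});m}$. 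This reproduces the stated leading coefficient; nonvanishing of the leading coefficient (so that the degree is exactly $(n+k-2)m+\mu_1+1$) follows by induction on $k$, the base case being Proposition \ref{property:n-1}.

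Part (4) is the main obstacle and needs the determinantal form together with a linear-dependence identity. Writing $\Phi(y_1,\dots,y_k)=\det[v(y_1);\dots;v(y_k)]$, so that $Q=\Phi(x_{s_1},\dots,x_{s_k})$, I would compute $\gamma_D Q=\tfrac{f_\lambda}{n!}C(D)R(D)Q$ with $C(D)=[S_{\{s_1,\dots,s_k\}}]'$ and $R(D)=[S_{\{s_1,s_{k+1},\dots,s_n\}}]$. Using the symmetry from (1), a coset decomposition gives $R(D)Q=(n-k)!\bigl(Q+\sum_{j=k+1}^n(s_1,s_j)Q\bigr)$, and since $\prod_l(t-x_{s_l})^m$ is symmetric the transposition $(s_1,s_j)$ only replaces the evaluation point $x_{s_1}$ by $x_{s_j}$, so $(s_1,s_j)Q=\Phi(x_{s_j},x_{s_2},\dots,x_{s_k})$. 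Antisymmetry gives $C(D)Q=k!\,Q$, and antisymmetrizing the cross term over $S_{\{s_1,\dots,s_k\}}$ (grouping by the image of $s_1$ and using that $\Phi$ is alternating) reduces $C(D)(s_1,s_j)Q$ to $(k-1)!\sum_{a=1}^k(-1)^{a-1}\Phi(x_{s_j},x_{s_1},\dots,\widehat{x_{s_a}},\dots,x_{s_k})$. The $(k+1)\times(k+1)$ matrix with top row $(1,\dots,1)$ and remaining rows the transposes of $v(x_{s_j}),v(x_{s_1}),\dots,v(x_{s_k})$ has two equal rows, hence determinant zero; expanding along the top row turns this vanishing into exactly $\sum_{a=1}^k(-1)^{a-1}\Phi(x_{s_j},x_{s_1},\dots,\widehat{x_{s_a}},\dots,x_{s_k})=\Phi(x_{s_1},\dots,x_{s_k})=Q$, so $C(D)(s_1,s_j)Q=(k-1)!\,Q$. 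Collecting terms, $C(D)R(D)Q=(n-k)!\,(k!+(n-k)(k-1)!)\,Q=n\,(n-k)!\,(k-1)!\,Q$, and since $f_\lambda=\binom{n-1}{k-1}$ this yields $\gamma_D Q=Q$. The delicate point throughout is the bookkeeping of signs in the antisymmetrization and recognizing the vanishing-determinant identity that makes the $n-k$ cross terms collapse onto $Q$.
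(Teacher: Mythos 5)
Your proposal is correct. For parts (1)--(3) it is essentially the paper's own argument in light disguise: your bordered determinant $\det N$ encodes exactly the paper's successive row additions (which convert the rows $R_{D;s_r,s_{r+1}}^{\mu_c;m}$ into $R_{D;s_1,s_{r+1}}^{\mu_c;m}$, the display (\ref{det399})), and your cofactor expansions for (2) and (3) are the same degree bookkeeping the paper performs with Proposition \ref{property:n-1}(2),(3); your explicit attention to the nonvanishing of the leading minor (induction on $k$) and to the pairwise coprimality of the linear forms in the $V_D^{2m+1}$ step spells out points the paper leaves implicit. In part (4) you genuinely diverge. The paper factors $\gamma_T$ through Proposition \ref{group ring} as $\frac{1}{n(n-k)!(k-1)!}\{1-\sum_{s=2}^k(1,s)\}[S_{\{2,\dots,k\}}]'\{1+\sum_{s=k+1}^n(1,s)\}[S_{\{k+1,\dots,n\}}]$ and then proves the telescoping identity $\{1-(1,2)-\cdots-(1,k)\}Q_{(1,s)T}^{\mu;m}=Q_T^{\mu;m}$ by repeatedly adding rows of the two determinants being summed; you instead compute $C(D)R(D)Q$ head-on via coset decompositions of the row and column groups, collapse the $n-k$ cross terms with the vanishing of a $(k+1)\times(k+1)$ determinant having two equal rows, i.e.\ the cofactor identity $\sum_{a=0}^{k}(-1)^a\Phi(y_0,\dots,\widehat{y_a},\dots,y_k)=0$, and finish with $f_{\eta(n,k)}=\binom{n-1}{k-1}$, getting $C(D)R(D)Q=n\,(n-k)!\,(k-1)!\,Q$ and hence $\gamma_DQ=Q$. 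The two mechanisms are equivalent: the paper's telescoping is your alternating-sum identity executed one transposition at a time, and its constant $\frac{1}{n(n-k)!(k-1)!}$ silently encodes the same count $f_\lambda=\binom{n-1}{k-1}$ that you invoke openly. What your route buys is a clean separation of the multilinear-algebra content (the alternating function $\Phi$ and the repeated-row determinant) from the group-algebra manipulation, making the normalization $\gamma_DQ=Q$ transparent; the cost is the sign bookkeeping in $C(D)(s_1,s_j)Q=(k-1)!\,Q$, which you carry out correctly, since $\mathrm{sgn}(\sigma)\,\mathrm{sgn}(\tau_\sigma)=(-1)^{a-1}$ for all $(k-1)!$ permutations with $\sigma(s_1)=s_a$. (One cosmetic slip: your verbal description of the bordering --- ``top row $(1,\dots,1)$ and remaining rows the transposes of $v$'' --- should read that the columns below the all-ones top row are the vectors $v(y_a)$, whose leading entries $1$ reproduce the top row; the identity you use is nonetheless the right one.)
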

\begin{proof}
{\rm We show the case $D=T$. The proofs of other cases are similar. 
(1)From Prop.$\ref{property:n-1}$(1),
it follows that $Q_{T}^{\mu;m}$ is symmetric in $x_{k+1},x_{k+2},\cdots ,x_n$. 

Adding the first row to the second row, we get 
\begin{equation*}
Q_{T}^{\mu;m}=
\begin{vmatrix}
R_{T;1,2}^{\mu_1;m}     & R_{T;1,2}^{\mu_2;m}     & \cdots & R_{T;1,2}^{\mu_{k-1};m} \\
R_{T;1,3}^{\mu_1;m}     & R_{T;1,3}^{\mu_2;m}     & \cdots & R_{T;1,3}^{\mu_{k-1};m} \\
  \vdots               &         \vdots         & \ddots &        \vdots          \\
R_{T;k-1,k}^{\mu_1;m}   & R_{T;k-1,k}^{\mu_1;m}   & \cdots & R_{T;k-1,k}^{\mu_{k-1};m}
\end{vmatrix} 
. 
\end{equation*}
Repeating this process, we get 

\begin{equation}
Q_{T}^{\mu;m}=
\begin{vmatrix}
R_{T;1,2}^{\mu_1;m}     & R_{T;1,2}^{\mu_2;m}     & \cdots & R_{T;1,2}^{\mu_{k-1};m} \\
R_{T;1,3}^{\mu_1;m}     & R_{T;1,3}^{\mu_2;m}     & \cdots & R_{T;1,3}^{\mu_{k-1};m} \\
  \vdots               &         \vdots         & \ddots &        \vdots          \\
R_{T;1,k}^{\mu_1;m}     & R_{T;1,k}^{\mu_2;m}     & \cdots & R_{T;1,k}^{\mu_{k-1};m}
\end{vmatrix} 
. \label{det399}
\end{equation}
Thus, $Q_{T}^{\mu;m}$ is anti-symmetric in $x_2,\cdots ,x_k$. 
We can show that $Q_{T}^{\mu;m}$ is anti-symmetric in $x_1, x_3,\cdots ,x_k$ and 
$x_1, x_2,x_4\cdots ,x_k$ in the similar way. Thus the first statement holds. 

From Remark.\ref{remark:VTn-1} and (\ref{det399}), $Q_{T}^{\mu;m}$ is divisible by 
$\prod_{s=2}^n(x_1-x_s)^{2m+1}$. 
Using this proposition $(1)$ we see $Q_{T}^{\mu;m}$ is also divisible by $V_T^{2m+1}$. \\
(2)
We see $Q_{T}^{\mu;m}$ as a polynomial in $x_1$.
From Prop.$\ref{property:n-1}$ (2),(3), 
the term of the degree of $Q_{T}^{\mu;m}$ is in
$R_{T;1,2}^{\mu_1;m}R_{T;2,3}^{\mu_2;m}\cdots R_{T;k-1,k}^{\mu_k;m}$. 
We use Prop.$\ref{property:n-1}$ (2),(3) again, therefore the statement holds. \\
(3)
From Prop.$\ref{property:n-1}$ (3), 
the leading coefficient of $x_{k+1}$ is

\begin{equation}
\begin{vmatrix}
(-1)^mR_{T^{k+1};1,2}^{\mu_1;m} & (-1)^mR_{T^{k+1};1,2}^{\mu_2;m} & \cdots & (-1)^mR_{T^{k+1};1,2}^{\mu_k;m} \\
(-1)^mR_{T^{k+1};2,3}^{\mu_1;m} & (-1)^mR_{T^{k+1};2,3}^{\mu_2;m} & \cdots & (-1)^mR_{T^{k+1};2,3}^{\mu_k;m} \\
  \vdots           &      \vdots        & \ddots &    \vdots          \\
(-1)^mR_{T^{k+1};k-1,k}^{\mu_1;m} & (-1)^mR_{T^{k+1};k-1,k}^{\mu_2;m} & \cdots & (-1)^mR_{T^{k+1};k-1,k}^{\mu_k;m}
\end{vmatrix}
. 
\label{eq:001}
\end{equation}

The polynomial (\ref{eq:001}) is equal to 
$(-1)^{(k-1)m}Q_{T^{k+1}}^{\mu;m}$. \\
(4)
To prove (4), we define the following notion.

For positive integers $i,j$ such that $i\neq j$, 
we define a tableau $(i,j)D$ as follows.
When $i,j \not\in mem(D)$, we define $(i,j)D=D$. 
When $i\in mem(D)$ and $j\not\in mem(D)$, 
$(i,j)D$ is a tableau obtained by replacing the entry $i$ in $D$ with $j$.
When $i,j\in mem(D)$,  is a tableau obtained by interchanging the entry $i$ and $j$ in $D$.

Using Prop.$\ref{group ring}$, $\gamma_T$ is 
\begin{equation*}
\frac{1}{n(n-k)!(k-1)!}\bigl\{ 1-\sum_{s=2}^k(1,s) \bigr\} [S_{\{ 2,3,\cdots ,k \}}]'
\bigl\{ 1+\sum_{s=k+1}^n(1,s) \bigr\} [S_{\{ k+1,\cdots,n \}}].
\end{equation*}

Therefore from (1), 
\begin{equation*}
\gamma_T(Q_{T}^{\mu;m})=\frac{1}{n}\bigl\{ kQ_{T}^{\mu;m} 
+\sum_{s=k+1}^n \{ 1-(1,2)-\cdots -(1,k)\} Q_{(1,s)T}^{\mu;m}\bigr\}.
\end{equation*}

We consider the sum $\displaystyle{\sum_{s=k+1}^n \{ 1-(1,2)-\cdots -(1,k)\} Q_{(1,s)T}^{\mu;m}}$.

\begin{align*}
&\sum_{s=k+1}^n \{ 1-(1,2)-(1,3)-\cdots -(1,k)\} Q_{(1,s)T}^{\mu;m} \\
=&\sum_{s=k+1}^n \{ Q_{(1,s)T}^{\mu;m}+Q_{(2,s)T}^{\mu;m}
+Q_{(3,s)T}^{\mu;m}+\cdots +Q_{(k,s)T}^{\mu;m} \}.
\end{align*}

Consider the sum $Q_{(1,s)T}^{\mu;m}+Q_{(2,s)T}^{\mu;m}$. 
By definition, we have
\begin{gather*}
Q_{(1,s)T}^{\mu;m}+Q_{(2,s)T}^{\mu;m}
\\
=
\begin{vmatrix}
R_{T;s,2}^{\mu_1;m}     & R_{T;s,2}^{\mu_2;m}     & \cdots & R_{T;s,2}^{\mu_{k-1};m} \\
R_{T;2,3}^{\mu_1;m}     & R_{T;2,3}^{\mu_2;m}     & \cdots & R_{T;2,3}^{\mu_{k-1};m} \\
  \vdots               &         \vdots         & \ddots &        \vdots          \\
R_{T;k-1,k}^{\mu_1;m}   & R_{T;k-1,k}^{\mu_2;m}   & \cdots & R_{T;k-1,k}^{\mu_{k-1};m}
\end{vmatrix} 
+
\begin{vmatrix}
R_{T;1,s}^{\mu_1;m}     & R_{T;1,s}^{\mu_2;m}     & \cdots & R_{T;1,s}^{\mu_{k-1};m} \\
R_{T;s,3}^{\mu_1;m}     & R_{T;s,3}^{\mu_2;m}     & \cdots & R_{T;s,3}^{\mu_{k-1};m} \\
  \vdots               &         \vdots         & \ddots &        \vdots          \\
R_{T;k-1,k}^{\mu_1;m}   & R_{T;k-1,k}^{\mu_2;m}   & \cdots & R_{T;k-1,k}^{\mu_{k-1};m}
\end{vmatrix} 
.
\end{gather*}

Adding the first row to the second row in the second determinant, we get
\begin{gather*}
Q_{(1,s)T}^{\mu;m}+Q_{(2,s)T}^{\mu;m}
\\
=
\begin{vmatrix}
R_{T;s,2}^{\mu_1;m}     & R_{T;s,2}^{\mu_2;m}     & \cdots & R_{T;s,2}^{\mu_{k-1};m} \\
R_{T;s,3}^{\mu_1;m}     & R_{T;s,3}^{\mu_2;m}     & \cdots & R_{T;s,3}^{\mu_{k-1};m} \\
  \vdots               &         \vdots         & \ddots &        \vdots          \\
R_{T;k-1,k}^{\mu_1;m}   & R_{T;k-1,k}^{\mu_2;m}   & \cdots & R_{T;k-1,k}^{\mu_{k-1};m}
\end{vmatrix} 
+
\begin{vmatrix}
R_{T;1,s}^{\mu_1;m}     & R_{T;1,s}^{\mu_2;m}     & \cdots & R_{T;1,s}^{\mu_{k-1};m} \\
R_{T;s,3}^{\mu_1;m}     & R_{T;s,3}^{\mu_2;m}     & \cdots & R_{T;s,3}^{\mu_{k-1};m} \\
  \vdots               &         \vdots         & \ddots &        \vdots          \\
R_{T;k-1,k}^{\mu_1;m}   & R_{T;k-1,k}^{\mu_2;m}   & \cdots & R_{T;k-1,k}^{\mu_{k-1};m}
\end{vmatrix}  
.
\end{gather*}
Adding the two term, we have

\begin{gather*}
Q_{(1,s)T}^{\mu;m}+Q_{(2,s)T}^{\mu;m}
\\
=
\begin{vmatrix}
R_{T;1,2}^{\mu_1;m}     & R_{T;1,2}^{\mu_2;m}     & \cdots & R_{T;1,2}^{\mu_{k-1};m} \\
R_{T;s,3}^{\mu_1;m}     & R_{T;s,3}^{\mu_2;m}     & \cdots & R_{T;s,3}^{\mu_{k-1};m} \\
  \vdots               &         \vdots         & \ddots &        \vdots          \\
R_{T;k-1,k}^{\mu_1;m}   & R_{T;k-1,k}^{\mu_2;m}   & \cdots & R_{T;k-1,k}^{\mu_{k-1};m}
\end{vmatrix} 
. 
\end{gather*}

Repeating this process, we get
\begin{equation*}
\{ 1-(1,2)-(1,3)-\cdots -(1,k)\} Q_{(1,s)T}^{\mu;m}=Q_{T}^{\mu;m}.
\end{equation*} 
Thus, proposition is proved. $\Box$}
\end{proof}

As a corollary of this proposition, we have
$Q_{T}^{\mu;m}\in \gamma_T({\bf QI_m})$ where $T\in ST(\eta(n,k))$. 

We introduce the following notions. 
\begin{definition}
{\rm 
Let $s,t,u$ be non-negative integers.
When $u\geq 1$, we set the subsets of partitions $P(s; t; u)$, $P(t; u)$ and $Q(s; t; u)$ as:
\begin{eqnarray*}
P(s; t; u)&=&\{ \lambda \mid |\lambda |=s, t\geq \lambda_1 >\lambda_2 >\cdots >\lambda_u 
\geq 0\} \\
Q(s; t; u)&=&P(s; t; u)\backslash P(s; t-1; u) \\
P(t; u)&=&\cup_{s\geq 0} P(s; t; u).
\end{eqnarray*}
When $u=0$, we set 
\begin{eqnarray*}
P(0; t; 0)&=&\{ \emptyset \} \\
P(t; 0)&=&\{ \emptyset \}.
\end{eqnarray*}
Let $l$ be a positive integer. We set $P(l; t; 0)$ as empty set.

We define $p(s; t; u)=\sharp P(s; t; u)$ and $q(s; t; u)=\sharp Q(s; t; u)$. 
}
\end{definition}
\begin{remark}
{\rm
Setting $\mu\in P(n-2;k-1)$ (resp. $\mu\in \cup_{s\geq 0} Q(s; n-2; k-1)$),
we have 
\begin{eqnarray*}
&&\frac{(k-1)(k-2)}{2}\leq |\mu| \leq (k-1)(n-k)+\frac{(k-1)(k-2)}{2} \\
&&\Bigl( resp.\ n-2+\frac{(k-2)(k-3)}{2}\leq |\mu|\leq (k-1)(n-k)+\frac{(k-1)(k-2)}{2} \Bigr).
\end{eqnarray*}
}
\label{remarkPQ}
\end{remark}

We have the following proposition.
\begin{proposition} 
Let $k$ be an integer such that $k\geq 2$. \\
{\rm(1)}Let $l$ be a integer such that $0\leq l\leq n-k-1$. Then, we have
\begin{equation*}
p\Bigl( l+\frac{(k-1)(k-2)}{2};n-3;k-1\Bigr) =p\Bigl( l+\frac{(k-1)(k-2)}{2};n-2;k-1 \Bigr).
\end{equation*} 
\\
{\rm(2)}Let $l$ be a integer such that $l\geq n-k$. Then, we have
\begin{eqnarray*}
&&p\Bigl( l+\frac{(k-1)(k-2)}{2};n-2;k-1\Bigr) \\
&=&p\Bigl( l+\frac{(k-1)(k-2)}{2};n-3;k-1\Bigr) \\
&+&p\Bigl( l+k-n+\frac{(k-2)(k-3)}{2};n-3;k-2 \Bigr).
\end{eqnarray*} \\
{\rm(3)}
Let $l\in \{0,1,\cdots ,k-2\}$. Then, we have
\begin{eqnarray*}
&&p\Bigl( (k-1)(n-k)+\frac{(k-1)(k-2)}{2}-l;n-2;k-1 \Bigr) \nonumber \\
&=&p\Bigl( (k-2)(n-k)+\frac{(k-2)(k-3)}{2}-l;n-3;k-2 \Bigr).
\end{eqnarray*}
\label{hilbert:n-k-prop}
\end{proposition}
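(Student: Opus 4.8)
The plan is to reduce all three identities to elementary facts about counting partitions inside a rectangle. The first step is the staircase bijection: to a strict sequence $\lambda=(\lambda_1>\lambda_2>\cdots>\lambda_u\ge 0)\in P(s;t;u)$ associate $\mu_i=\lambda_i-(u-i)$. Then $\mu_1\ge\mu_2\ge\cdots\ge\mu_u\ge 0$ is an ordinary partition with at most $u$ parts, each at most $t-u+1$, and $|\mu|=s-\binom{u}{2}$; the map is a bijection. Writing $N(a;r,c)$ for the number of partitions of $a$ having at most $r$ parts, each of size at most $c$ (equivalently, partitions whose Young diagram fits in an $r\times c$ box, with the convention $N=0$ when $c<0$), this gives
\[
p(s;t;u)\;=\;N\!\left(s-\binom{u}{2};\;u,\;t-u+1\right).
\]

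Next I would record three standard properties of $N$, each with a one-line proof. (i) A bound is inactive once it is at least the size: since a partition of $a$ has at most $a$ positive parts, each of size at most $a$, we have $N(a;r,c)=N(a;r',c)$ whenever $r,r'\ge a$, and likewise $N(a;r,c)=N(a;r,c')$ whenever $c,c'\ge a$. (ii) The Pascal-type recursion $N(a;r,c)=N(a;r,c-1)+N(a-c;r-1,c)$, obtained by separating the partitions whose largest part equals $c$ and deleting that part. (iii) The complement symmetry $N(a;r,c)=N(rc-a;r,c)$, given by the involution sending a diagram to its complement in the $r\times c$ rectangle.

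It then remains to translate each part through the boxed formula and apply one of (i)--(iii). Using $(n-2)-(k-1)+1=n-k$ and $(n-3)-(k-1)+1=n-k-1$, part (1) becomes $N(l;k-1,n-k-1)=N(l;k-1,n-k)$, immediate from (i) since $l\le n-k-1$. Part (2) becomes $N(l;k-1,n-k)=N(l;k-1,n-k-1)+N(l-(n-k);k-2,n-k)$, which is precisely the recursion (ii) with $r=k-1,\,c=n-k$ (and its last term vanishes exactly when $l<n-k$, which is why parts (1) and (2) split at $l=n-k$). For part (3), the two sides translate to $N((k-1)(n-k)-l;k-1,n-k)$ and $N((k-2)(n-k)-l;k-2,n-k)$; applying the complement symmetry (iii) to each rectangle rewrites these as $N(l;k-1,n-k)$ and $N(l;k-2,n-k)$, which agree by (i) because $l\le k-2$.

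The combinatorial content of each part is thus a single application of (i), (ii), or (iii); the only genuine work is the bookkeeping in the translation, namely checking that $s-\binom{u}{2}$ and $t-u+1$ collapse to the claimed arguments and that the rectangle areas $(k-1)(n-k)$ and $(k-2)(n-k)$ match the large arguments in part (3). This is where I expect to have to be most careful, since it is precisely this arithmetic that ties the three stated ranges of $l$ to the vanishing or nonvanishing of the relevant term. I would also verify the boundary values (small $n-k$, or $u=k-2=0$ when $k=2$), where the rectangle degenerates; there both sides reduce to an indicator of $l=0$, and the identities hold under the conventions fixed in the definition of $P(s;t;u)$.
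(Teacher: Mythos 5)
Your proposal is correct, and while parts (1) and (2) are in essence the paper's own arguments transported through your staircase bijection, part (3) is proved by a genuinely different mechanism. The paper never leaves the world of strict sequences: it works with $Q(s;t;u)=P(s;t;u)\setminus P(s;t-1;u)$ (largest part exactly $t$) and the size bounds of Remark \ref{remarkPQ}. Its part (1) shows $q\bigl(l+\frac{(k-1)(k-2)}{2};n-2;k-1\bigr)=0$ directly from the lower bound $|\mu|\ge n-2+\frac{(k-2)(k-3)}{2}$ for $\mu$ with largest part $n-2$; its part (2) deletes the forced largest part $n-2$ and sums over the possible new largest parts, giving $q\bigl(l+\frac{(k-1)(k-2)}{2};n-2;k-1\bigr)=p\bigl(l+k-n+\frac{(k-2)(k-3)}{2};n-3;k-2\bigr)$; under your dictionary $p(s;t;u)=N\bigl(s-\binom{u}{2};u,t-u+1\bigr)$ these are exactly your ``inactive bound'' observation (i) and Pascal recursion (ii), so here the two proofs differ only in clothing. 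The divergence is part (3): the paper writes $p=\sum_s q(\,\cdot\,;s;k-1)$, uses the Remark to show that at these large sizes only the stratum with largest part exactly $n-2$ survives, and then invokes its part (2); you instead apply the rectangle-complement involution $N(a;r,c)=N(rc-a;r,c)$ to both sides and finish with (i) — a bijective symmetry argument the paper does not use anywhere. What your normalization buys is uniformity: all three identities become one-line instances of standard facts about partitions in an $r\times c$ box, i.e.\ coefficient identities for the Gaussian binomial $\prod_{s=1}^{k-1}(1-t^{n-s})/(1-t^s)$, which is precisely the product the paper must expand in the lemma following this proposition; the cost is the translation bookkeeping, which I have checked — including the degenerate cases $u=k-2=0$ and $n=k$, where your convention $N(a;0,c)=N(a;r,0)=[a=0]$ matches the paper's $P(0;t;0)=\{\emptyset\}$, $P(l;t;0)=\emptyset$ — and it is sound.
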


\begin{proof}

{\rm
(1)By definition, we have
\begin{eqnarray*}
&&q\Bigl( l+\frac{(k-1)(k-2)}{2};n-2;k-1\Bigr) \\
&=&p\Bigl( l+\frac{(k-1)(k-2)}{2};n-2;k-1\Bigr)-p\Bigl( l+\frac{(k-1)(k-2)}{2};n-3;k-1\Bigr).
\end{eqnarray*}
Therefore we show $q\Bigl( l+\frac{(k-1)(k-2)}{2};n-2;k-1\Bigr)=0$. 

We have $l+\frac{(k-1)(k-2)}{2}\leq n-k-1+\frac{(k-1)(k-2)}{2}<n-2+\frac{(k-2)(k-3)}{2}$. 
From Remark.\ref{remarkPQ}, proposition follows.
\\
(2)
To prove (2), we show 
\begin{eqnarray*}
&&q\Bigl( l+\frac{(k-1)(k-2)}{2};n-2;k-1\Bigr) \\
&=&p\Bigl( l+k-n+\frac{(k-2)(k-3)}{2};n-3;k-2 \Bigr).
\end{eqnarray*}

By definition, we have
\begin{equation*}
q\Bigl( l+\frac{(k-1)(k-2)}{2};n-2;k-1\Bigr) =\sum_{s=0}^{n-3}q\Bigl( l+\frac{(k-1)(k-2)}{2}-n+2;s;k-2 \Bigr) .
\end{equation*}
We have $l+\frac{(k-1)(k-2)}{2}-n+2=l+k-n+\frac{(k-2)(k-3)}{2}$, therefore we get
\begin{eqnarray*}
&&q\Bigl( l+\frac{(k-1)(k-2)}{2};n-2;k-1\Bigr) \\
&=&\sum_{s=0}^{n-3}q\Bigl( l+k-n+\frac{(k-2)(k-3)}{2};s;k-2\Bigr).
\end{eqnarray*}
By definition, we obtain
\begin{eqnarray*}
&&\sum_{s=0}^{n-3}q\Bigl( l+k-n+\frac{(k-2)(k-3)}{2};s;k-2\Bigr) \\
&=&p\Bigl( l+k-n+\frac{(k-2)(k-3)}{2};n-3;k-2\Bigr).
\end{eqnarray*}
\\
(3)
By definition, we have
\begin{eqnarray*}
&&p\Bigl( (k-1)(n-k)+\frac{(k-1)(k-2)}{2}-l;n-2;k-1 \Bigr) \\
&=&\sum_{s=0}^{n-2}q\Bigl( (k-1)(n-k)+\frac{(k-1)(k-2)}{2}-l;s;k-1 \Bigr).
\end{eqnarray*}

From Remark.\ref{remarkPQ}, 
we have $q\Bigl( (k-1)(n-k)+\frac{(k-1)(k-2)}{2}-l;s;k-1 \Bigr)=0$ when $s\leq n-3$. 
Therefore we obtain 
\begin{eqnarray*}
&&p\Bigl( (k-1)(n-k)+\frac{(k-1)(k-2)}{2}-l;n-2;k-1 \Bigr) \\
&=&q\Bigl( (k-1)(n-k)+\frac{(k-1)(k-2)}{2}-l;n-2;k-1 \Bigr). 
\end{eqnarray*}
From (2), we have 
\begin{eqnarray*}
&&q\Bigl( (k-1)(n-k)+\frac{(k-1)(k-2)}{2}-l;n-2;k-1 \Bigr) \\
&=& p\Bigl( (k-1)(n-k)+\frac{(k-2)(k-3)}{2}-l+k-n;n-3;k-2 \Bigr) \\
&=& p\Bigl( (k-2)(n-k)+\frac{(k-2)(k-3)}{2}-l;n-3;k-2 \Bigr).\ \Box
\end{eqnarray*}
}
\end{proof}

We next consider the Hilbert series of $\gamma_T({\bf QI_m}^*)$.
To simplified notions, we define $p_{s,n-2,k-1}=p\Bigl( s+\frac{(k-1)(k-2)}{2};n-2;k-1 \Bigr)$.

Proposition.\ref{hilbert:n-k-prop} is written as:
\begin{eqnarray*}
&(1)&p_{l,n-3,k-1}=p_{l,n-2,k-1} \\
&(2)&p_{l,n-2,k-1}=p_{l,n-3,k-1}+p_{l+k-n,n-3,k-2} \\
&(3)&p_{(k-1)(n-k)-l,n-2,k-1}=p_{(k-2)(n-k)-l,n-3,k-2}. 
\end{eqnarray*}

\begin{lemma}
We have
\begin{equation}
H(\gamma_T({\bf QI_m}^*); t)=t^{(k-1)nm+\frac{k(k-1)}{2}}\sum_{s=0}^{(k-1)(n-k)}
p_{s,n-2,k-1} t^s.
\label{hilbetr:n-k-complete}
\end{equation}
\end{lemma}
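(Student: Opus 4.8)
The plan is to specialize the closed formula (\ref{hilbert:gammaT}) to the hook $\lambda=\eta(n,k)=(n-k+1,1^{k-1})$ and then read off its coefficients combinatorially. First I would record the hook statistics. The boxes are $(1,1),\dots,(1,n-k+1)$ in the first row and $(2,1),\dots,(k,1)$ in the first column, and a direct count gives $a(1,1)=n-k$, $l(1,1)=k-1$; $a(1,j)=n-k+1-j$, $l(1,j)=0$ for $j\ge 2$; and $a(i,1)=0$, $l(i,1)=k-i$ for $i\ge 2$. Hence the multiset of hook lengths is $\{n\}\cup\{1,2,\dots,n-k\}\cup\{1,2,\dots,k-1\}$, while $\sum_{(i,j)}l(i,j)=\tfrac{k(k-1)}{2}$ and $\sum_{(i,j)}a(i,j)=\tfrac{(n-k)(n-k+1)}{2}$.

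Next I would collect the total power of $t$ produced by the prefactor $t^{mn(n-1)/2}$ together with the box contributions $t^{w(i,j;m)}$, where $w(i,j;m)=m(l(i,j)-a(i,j))+l(i,j)$. Summing over the hook, the coefficient of $m$ is $\tfrac12\bigl(n(n-1)+k(k-1)-(n-k)(n-k+1)\bigr)$, which simplifies (using $n(n-1)+k(k-1)-(n-k)(n-k+1)=2n(k-1)$) to $(k-1)n$, while the $m$-free part is $\sum_{(i,j)}l(i,j)=\tfrac{k(k-1)}{2}$. This reproduces exactly the factor $t^{(k-1)nm+\frac{k(k-1)}{2}}$ of the statement. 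For the rational factor of (\ref{hilbert:gammaT}), substituting the hook lengths makes the factor $(1-t^n)$ of the denominator cancel against the numerator $\prod_{i=1}^{n}(1-t^i)$, leaving
\[
\frac{\prod_{i=1}^{n-1}(1-t^i)}{\prod_{i=1}^{n-k}(1-t^i)\,\prod_{i=1}^{k-1}(1-t^i)},
\]
which, since $(n-k)+(k-1)=n-1$, is the Gaussian binomial coefficient $\binom{n-1}{k-1}_{t}$.

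The final and most delicate step is to identify the coefficients of $\binom{n-1}{k-1}_{t}$ with the numbers $p_{s,n-2,k-1}$. I would invoke the standard fact that $\binom{n-1}{k-1}_{t}=\sum_{\nu}t^{|\nu|}$, the sum running over partitions $\nu$ fitting in a $(k-1)\times(n-k)$ rectangle, i.e.\ $n-k\ge\nu_1\ge\cdots\ge\nu_{k-1}\ge 0$; this already forces the summation range $0\le s\le(k-1)(n-k)$. The shift $\lambda_i=\nu_i+(k-1-i)$ then carries such a $\nu$ to a strictly decreasing sequence $n-2\ge\lambda_1>\cdots>\lambda_{k-1}\ge 0$ with $|\lambda|=|\nu|+\tfrac{(k-1)(k-2)}{2}$, so the number of $\nu$ with $|\nu|=s$ equals $p\bigl(s+\tfrac{(k-1)(k-2)}{2};n-2;k-1\bigr)=p_{s,n-2,k-1}$ by the definition of $p(\,\cdot\,;\,\cdot\,;\,\cdot\,)$. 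The bookkeeping here is the main obstacle: one must match the rectangle bound $\nu_1\le n-k$ with the cutoff $\lambda_1\le n-2$ and track the triangular shift $\tfrac{(k-1)(k-2)}{2}$ precisely. (Alternatively, the identity $\binom{n-1}{k-1}_{t}=\sum_{s}p_{s,n-2,k-1}t^{s}$ can be obtained by induction on $k$, since the Pascal recursion $\binom{n-1}{k-1}_{t}=\binom{n-2}{k-1}_{t}+t^{n-k}\binom{n-2}{k-2}_{t}$ is exactly relation $(2)$ of the reformulation of Proposition~\ref{hilbert:n-k-prop}, with relations $(1)$ and $(3)$ controlling the endpoints of the range.) Multiplying the prefactor by this expansion yields (\ref{hilbetr:n-k-complete}).
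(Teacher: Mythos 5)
Your proof is correct, and its first half coincides with the paper's own: both specialize (\ref{hilbert:gammaT}) to the hook using the same statistics ($a(1,1)=n-k$, $l(1,1)=k-1$, $h(1,i)=n-k+2-i$, $h(j,1)=k-j+1$) and arrive at $H(\gamma_T({\bf QI_m}^*);t)=t^{(k-1)nm+\frac{k(k-1)}{2}}\prod_{s=1}^{k-1}\frac{1-t^{n-s}}{1-t^s}$; your exponent computation, including the simplification $n(n-1)+k(k-1)-(n-k)(n-k+1)=2n(k-1)$, checks out. Where you genuinely diverge is in proving the remaining identity $\prod_{s=1}^{k-1}\frac{1-t^{n-s}}{1-t^s}=\sum_{s=0}^{(k-1)(n-k)}p_{s,n-2,k-1}t^s$. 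The paper argues by induction on $n$: it splits the product via $\prod_{s=1}^{k-1}\frac{1-t^{n-s}}{1-t^s}=\prod_{s=1}^{k-1}\frac{1-t^{n-s-1}}{1-t^s}+t^{n-k}\prod_{s=1}^{k-2}\frac{1-t^{n-s-1}}{1-t^s}$ and then invokes the three counting recursions of Proposition~\ref{hilbert:n-k-prop}, proved separately, with part (2) matching the middle range of coefficients and parts (1) and (3) handling the two ends. You instead identify the product as the Gaussian binomial $\binom{n-1}{k-1}_{t}$, use its classical interpretation as the generating function for partitions in a $(k-1)\times(n-k)$ rectangle, and transport the count by the staircase shift $\lambda_i=\nu_i+(k-1-i)$; your bookkeeping is exactly right, since $\nu_1\le n-k$ corresponds to $\lambda_1\le n-2$ and $|\lambda|=|\nu|+\frac{(k-1)(k-2)}{2}$ lands precisely on the definition of $p\bigl(s+\frac{(k-1)(k-2)}{2};n-2;k-1\bigr)=p_{s,n-2,k-1}$, with the correct range $0\le s\le(k-1)(n-k)$. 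The bijective route is shorter and renders Proposition~\ref{hilbert:n-k-prop} unnecessary for this lemma (in the paper that proposition exists essentially to feed this induction), at the cost of importing the standard rectangle interpretation of $\binom{n-1}{k-1}_{t}$; the paper's induction is self-contained and in effect re-proves that standard fact from scratch via the same Pascal recursion, an equivalence you correctly observe in your parenthetical remark.
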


\begin{proof}
{\rm From ($\ref{hilbert:gammaT}$), $H(\gamma_T({\bf QI_m}^*); t)$ is equal to
\begin{equation*}
t^{mn(n-1)/2}\prod_{(i,j)\in \lambda}\prod_{l=1}^n t^{m(l(i,j)-a(i,j))+l(i,j)}
\frac{1-t^l}{1-t^{h(i,j)}}.
\end{equation*}

For $2\leq i \leq n-k+1$ and $2\leq j \leq k$, we have 
\begin{eqnarray*}
&&a(1,1)=n-k,\ l(1,1)=k-1,\ h(1,1)=n \\
&&a(1,i)=n-k+1-i,\ l(1,i)=0,\ h(1,i)=n-k+2-i \\
&&a(j,1)=0,\ l(j,1)=k-j,\ h(j,1)=k-j+1. 
\end{eqnarray*}

Thus we have
\begin{equation*}
H(\gamma_T({\bf QI_m}^*); t) 
=t^{(k-1)nm+\frac{k(k-1)}{2}}\prod_{s=1}^{k-1}\frac{(1-t^{n-s})}{(1-t^s)}.
\end{equation*}

Therefore, we must show 
\begin{equation}
\prod_{s=1}^{k-1}\frac{(1-t^{n-s})}{(1-t^s)}
=\sum_{s=0}^{(k-1)(n-k)}p_{s,n-2,k-1}t^s.
\label{hilbetr:n-2sub}
\end{equation}

We show this by induction on $n$.

If $n=k$, then both of l.h.s and r.h.s are both equal to 1.

When $n\geq k+1$, we assume that ($\ref{hilbetr:n-2sub}$) holds with all numbers less than $n$.
We have the following identity:
\begin{equation*}
\prod_{s=1}^{k-1}\frac{(1-t^{n-s})}{(1-t^s)}=
\prod_{s=1}^{k-1}\frac{(1-t^{n-s-1})}{(1-t^s)}+t^{n-k}\prod_{s=1}^{k-2}\frac{(1-t^{n-s-1})}{(1-t^s)}.\\
\end{equation*}
By induction assumption, we obtain
\begin{eqnarray*}
&&\prod_{s=1}^{k-1}\frac{(1-t^{n-s-1})}{(1-t^s)}+t^{n-k}\prod_{s=1}^{k-2}\frac{(1-t^{n-s-1})}{(1-t^s)} \\
&=&\sum_{s=0}^{(k-1)(n-k-1)}p_{s,n-3,k-1}t^s 
+t^{n-k}\sum_{s=0}^{(k-2)(n-k)}p_{s,n-3,k-2}t^s .
\end{eqnarray*}

We can rewrite this as
\begin{eqnarray*}
&&\prod_{s=1}^{k-1}\frac{(1-t^{n-s})}{(1-t^s)} \\
&=&\sum_{s=(k-1)(n-k)-k+2}^{(k-1)(n-k)}p_{s-n+k,n-3,k-2}t^s \\
&+&\sum_{s=n-k}^{(k-1)(n-k-1)}\bigl( p_{s-n+k,n-3,k-2}+p_{s,n-3,k-1}\bigr) t^s \\
&+&\sum_{s=0}^{n-k-1}p_{s,n-3,k-1}t^s.
\end{eqnarray*}

Using Proposition.\ref{hilbert:n-k-prop} (2), we have
\begin{eqnarray*}
&&\sum_{s=n-k}^{(k-1)(n-k-1)}\bigl( p_{s-n+k,n-3,k-2}+p_{s,n-3,k-1}\bigr) t^s \\
&=&\sum_{s=n-k}^{(k-1)(n-k-1)}p_{s,n-2,k-1} t^s.
\end{eqnarray*}
From Proposition.\ref{hilbert:n-k-prop} (1) and (3), lemma holds.
$\Box$
}
\end{proof}

We state the main theorem in this paper. 
\begin{theorem}
$\bigl\{ Q_{T}^{\mu;m} \bigr\}_{\mu\in P(n-2;k-1)}$ is a basis of $\gamma_T({\bf QI_m}^*)$. 
\label{basis:n-k+1}
\end{theorem}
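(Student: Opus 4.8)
The plan is to combine three facts already in place: that each $Q_T^{\mu;m}$ lies in $\gamma_T(\mathbf{QI_m})$ (the corollary to Proposition~\ref{property:n-k}), the degree formula $\deg Q_T^{\mu;m}=(k-1)nm+|\mu|+k-1$ from Proposition~\ref{property:n-k}(2), and the Hilbert series~(\ref{hilbetr:n-k-complete}). First I would check that the candidate elements sit in the right degrees: grouping $\mu\in P(n-2;k-1)$ by $|\mu|$ and writing $|\mu|=s+\tfrac{(k-1)(k-2)}{2}$ gives
\[
\sum_{\mu\in P(n-2;k-1)}t^{\deg Q_T^{\mu;m}}=t^{(k-1)nm+\frac{k(k-1)}{2}}\sum_{s=0}^{(k-1)(n-k)}p_{s,n-2,k-1}\,t^{s}=H(\gamma_T(\mathbf{QI_m}^*);t),
\]
the last equality being exactly the preceding Lemma. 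Hence in every degree the number of $Q_T^{\mu;m}$ equals $\dim\gamma_T(\mathbf{QI_m}^*)$, and the theorem follows once these elements are shown to be independent in the quotient.

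Second, I would upgrade ``independent in the quotient'' to the cleaner statement of $\Lambda_n$-linear independence. Since $\gamma_T$ is $\Lambda_n$-linear and $\mathbf{QI_m}$ is free over $\Lambda_n=\mathbb{Q}[e_1,\dots,e_n]$ by Etingof--Ginzburg~\cite{etingof}, the summand $\gamma_T(\mathbf{QI_m})$ is a free graded $\Lambda_n$-module with $\gamma_T(\mathbf{QI_m})/\langle e_1,\dots,e_n\rangle=\gamma_T(\mathbf{QI_m}^*)$, so that $H(\gamma_T(\mathbf{QI_m});t)=H(\gamma_T(\mathbf{QI_m}^*);t)\prod_{i=1}^n(1-t^i)^{-1}$. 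If the $Q_T^{\mu;m}$ are independent over $\Lambda_n$, the free submodule they generate has Hilbert series $\big(\sum_\mu t^{\deg Q_T^{\mu;m}}\big)\prod_i(1-t^i)^{-1}$, which by the display above equals $H(\gamma_T(\mathbf{QI_m});t)$; being contained in $\gamma_T(\mathbf{QI_m})$ with the same finite dimension in each degree, it must coincide with it. Then the $Q_T^{\mu;m}$ form a free $\Lambda_n$-basis, their images a basis of $\gamma_T(\mathbf{QI_m}^*)$, proving the theorem (and Corollary~\ref{cor:n-k333}).

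Third, I would prove the $\Lambda_n$-independence by induction on $k$. The base case $k=2$ is Theorem~\ref{basis:thn-1}: there $Q_T^{(\mu_1);m}=R_{T;1,2}^{\mu_1;m}$ is the Bandlow--Musiker polynomial, and its being a basis of the quotient lifts, by freeness, to a free $\Lambda_n$-basis. For the inductive step I would feed a hypothetical relation $\sum_\mu f_\mu Q_T^{\mu;m}=0$ with $f_\mu\in\Lambda_n$ into Proposition~\ref{property:n-k}(2): the top $x_{s_1}$-coefficient of $Q_T^{\mu;m}$ is a nonzero scalar times $Q_{T^{s_1}}^{(\mu_2,\dots,\mu_{k-1});m}$, a member of the corresponding family for the smaller hook $\eta(n-1,k-1)$ in the variables $x_{s_2},\dots,x_{s_n}$, whose index $(\mu_2,\dots,\mu_{k-1})$ runs over exactly $P(n-3;k-2)$. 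Since the $x_{s_1}$-leading coefficient of a symmetric $f_\mu$ is again symmetric in $x_{s_2},\dots,x_{s_n}$, passing to the top $x_{s_1}$-degree produces a $\Lambda(x_{s_2},\dots,x_{s_n})$-relation among the smaller-hook polynomials, to which the induction hypothesis applies.

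The main obstacle is exactly this passage to the smaller hook. Distinct $\mu$ sharing the tail $(\mu_2,\dots,\mu_{k-1})$ but differing in $\mu_1$ yield the \emph{same} reduced polynomial $Q_{T^{s_1}}^{(\mu_2,\dots,\mu_{k-1});m}$, so the induction hypothesis only forces one combination of their leading coefficients to vanish; and because $f_\mu$ is symmetric, its $x_{s_1}$-degree can lie strictly below its total degree, so such collisions genuinely occur at the top $x_{s_1}$-degree. To disentangle them I would filter by the degree in $x_{s_1}$ rather than read off a single top coefficient: the polynomials with a fixed tail have the distinct, explicit $x_{s_1}$-degrees $(n+k-2)m+\mu_1+1$, so ordering contributions by $x_{s_1}$-degree removes the indices one value of $\mu_1$ at a time. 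Equivalently one tracks the iterated leading monomial $\prod_{i=1}^{k-1}x_{s_i}^{(n+k-2i)m+\mu_i+1}$, which separates all $\mu$; controlling how multiplication by the symmetric $f_\mu$ shifts these monomials against the partition shape of the leading monomial of $f_\mu$ is where the genuine care is required.
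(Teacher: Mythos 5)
Your first two steps are sound and agree with the paper's architecture: the degree count against the Hilbert series is exactly the role of the Lemma giving (\ref{hilbetr:n-k-complete}), and your upgrade to $\Lambda_n$-independence via Etingof--Ginzburg freeness is the content the paper carries in Proposition \ref{proposition:n-k+1} and Corollary \ref{cor:n-k333}. The genuine gap is in your third step: the inductive proof of $\Lambda_n$-independence is never actually carried out, and the obstacle you yourself identify --- distinct $\mu$ sharing a tail $(\mu_2,\dots,\mu_{k-1})$ colliding at the top $x_{s_1}$-degree because $\deg_{x_{s_1}}f_\mu$ is not under control --- is precisely the hard point, and neither of your proposed fixes resolves it. Filtering by $x_{s_1}$-degree yields, for each tail and each total degree, only a single linear relation among particular $x_{s_1}$-coefficients of the various $f_{(\mu_1,\mathrm{tail})}$, which kills no individual $f_\mu$; pushing further requires analyzing the sub-leading terms of the $Q_T^{\mu;m}$ in $x_{s_1}$, about which Proposition \ref{property:n-k} says nothing. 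The iterated-leading-monomial idea also fails as stated: in any lex order the leading monomial of the product is $x^{\lambda}\prod_i x_{s_i}^{(n+k-2i)m+\mu_i+1}$ with $x^{\lambda}$ the (partition-shaped) leading monomial of $f_\mu$, and distinct pairs can produce the same monomial --- already for $k=2$ one has $x^{\lambda}x_1^{nm+\mu_1+1}=x^{\lambda'}x_1^{nm+\mu_1'+1}$ whenever $\mu_1'=\mu_1+1$ and $\lambda=\lambda'+(1,0,\dots,0)$ --- so top-monomial cancellations between different $\mu$ genuinely occur.

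The paper disposes of exactly this collision by a mechanism absent from your sketch: it expands each unknown $f_\mu$ in the basis $\{e_\nu\}$ and inducts on the length $r=l(\nu)$, extracting leading coefficients in the first-row variable $x_{k+1}$ rather than in $x_{s_1}$. By Proposition \ref{property:n-k}(3), every $Q_T^{\mu;m}$ has the \emph{same} $x_{k+1}$-degree $(k-1)m$, while $\deg_{x_{k+1}}e_\nu=l(\nu)$, so the top $x_{k+1}$-degree isolates the terms with $l(\nu)=r$ independently of the unknown coefficients --- this is what makes the leading-coefficient extraction well defined where yours is not. The stratum $\mu_1=n-2$ (whose indices do not descend to the smaller hook $T^{k+1}$) is split off first and handled by an $x_1$-extraction; there no collision arises, because $\mu_1$ is pinned at $n-2$ and all the relevant $e_\nu$ have the same $x_1$-degree $r$, so the tail map $\mu\mapsto(\mu_2,\dots,\mu_{k-1})$ is injective onto $P(n-3;k-2)$. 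Note finally that the paper's induction is on the number of boxes $n$, quantified over all hooks with $n-1$ boxes, because the two reductions land in the two \emph{different} smaller hooks $\eta(n-1,k-1)$ (via $T^1$) and $\eta(n-1,k)$ (via $T^{k+1}$); your induction on $k$ alone would not furnish the hypothesis needed for the second reduction. So the proposal, while correctly framed, is missing the key idea at its crucial step.
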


To simplified notions, we set
\begin{eqnarray*}
&&P_{s,n-2,k-1}=P\Bigl( s+\frac{(k-1)(k-2)}{2};n-2;k-1 \Bigr) \\
&&P_{n-2,k-1}=P\Bigl( n-2;k-1 \Bigr) \\
&&Q_{s,n-2,k-1}=Q\Bigl( s+\frac{(k-1)(k-2)}{2};n-2;k-1 \Bigr).
\end{eqnarray*}

We define following notions.

Let $X=\{ s_1,s_2,\cdots ,s_n \}$ be the set of $n$ positive integers.
We recall that $S_X$ is the symmetric group on $X$ 
and $S_X$ acts on $\mathbb{Q}[x_{s_1},x_{s_2},\cdots ,x_{s_n}]$ on the left.

We define $\Lambda_X$ as the subspace of $\mathbb{Q}[x_{s_1},x_{s_2},\cdots ,x_{s_n}]$ 
spanned by all polynomials which is invariant under $S_X$. 
We define $\Lambda_X^d$ as the subspace of $\Lambda_X$ spanned by homogeneous polynomials of degree $d$.
We define $\Lambda_X^d=\{ 0 \}$ if $d<0$.

Theorem.\ref{basis:n-k+1} follows from the following proposition. 
\begin{proposition} Let $D$ be a tableau of shape $\eta(n,k)$.
If
\begin{equation}
\sum_{\mu\in P(n-2;k-1)}f_{\mu}Q_{D}^{\mu;m}=0
\label{n-k:independents}
\end{equation}
where $f_{\mu}\in \Lambda_{mem(D)}$,
then all $f_{\mu}$ is equal to 0.
\label{proposition:n-k+1}
\end{proposition}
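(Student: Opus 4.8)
The plan is to prove the linear independence by induction on $k$, taking $D=T$ as in the proof of Proposition \ref{property:n-k} (the other tableaux are handled identically after relabelling). The base cases are $k=1$, where $P(n-2;0)=\{\emptyset\}$ and $Q_T^{\emptyset;m}=1$ so the statement is trivial, and $k=2$, which is the content of Theorem \ref{basis:thn-1} (combined with the freeness of $\gamma_T(\mathbf{QI_m})$ over $\Lambda_n$ recalled below). For the inductive step I would first reduce to homogeneous relations: since $Q_T^{\mu;m}$ is homogeneous of degree $D_\mu=(k-1)nm+|\mu|+(k-1)$ by Proposition \ref{property:n-k}(2), a relation $\sum_\mu f_\mu Q_T^{\mu;m}=0$ splits into homogeneous components, so I may assume all $f_\mu Q_T^{\mu;m}$ share a common degree $C$, whence $\deg f_\mu=C-D_\mu$. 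The engine of the reduction is Proposition \ref{property:n-k}(2): the $x_1$-leading coefficient of $Q_T^{\mu;m}$ is a \emph{nonzero constant} times $Q_{T^{1}}^{(\mu_2,\dots,\mu_{k-1});m}$, an object attached to the smaller hook $\eta(n-1,k-1)$ whose index $(\mu_2,\dots,\mu_{k-1})$ lies in $P(n-3;k-2)$ because $\mu_2\le\mu_1-1\le n-3$. Crucially, the $x_1$-leading coefficient of a symmetric $f_\mu$ is again symmetric in $x_2,\dots,x_n$, i.e. lies in $\Lambda_{mem(T^1)}$, so the reduced relation stays within the inductive framework.

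\textbf{The separating device, and the obstacle.} The honest difficulty is that a symmetric polynomial $f_\mu$ has no control on its degree in $x_1$ in terms of its total degree, so a single $x_1$-leading coefficient of the relation $\sum_\mu f_\mu Q_T^{\mu;m}=0$ mixes together all partitions $\mu$ sharing a common tail $(\mu_2,\dots,\mu_{k-1})$ but differing in $\mu_1$; such partitions necessarily have different $|\mu|$, and the degree bookkeeping (the bound on $\deg_{x_1}(f_\mu Q_T^{\mu;m})$ depends on the tail alone) shows they genuinely contribute at the same order, so the induction hypothesis cannot disentangle them directly. The key observation that circumvents this is that \emph{for partitions of a fixed size $c$ the tail map $\mu\mapsto(\mu_2,\dots,\mu_{k-1})$ is injective}: two partitions of size $c$ with equal tail have $\mu_1=c-(\mu_2+\cdots+\mu_{k-1})$, hence coincide. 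This is exactly the splitting of $P(n-2;k-1)$ recorded in Proposition \ref{hilbert:n-k-prop}(2). I would therefore reduce to relations in which all $\mu$ have the \emph{same} size $c$: by \cite{etingof} the module $\gamma_T(\mathbf{QI_m})$ is free over $\Lambda_n$, so by the graded Nakayama lemma the desired $\Lambda_n$-independence is equivalent to $\mathbb{Q}$-linear independence of the images of the $Q_T^{\mu;m}$ in $\gamma_T(\mathbf{QI_m}^*)=\gamma_T(\mathbf{QI_m})/\langle e_1,\dots,e_n\rangle\gamma_T(\mathbf{QI_m})$. Since $\deg Q_T^{\mu;m}=D_\mu$ depends only on $|\mu|$, and since the Hilbert series (\ref{hilbetr:n-k-complete}) shows the relevant graded piece has dimension exactly $p_{s,n-2,k-1}=\sharp\{\mu:|\mu|=\tfrac{(k-1)(k-2)}2+s\}$, it suffices to treat each size $c$ separately. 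Within a fixed size the coefficients become scalars and, by injectivity of the tail map, the tails are pairwise distinct, so the cross-$\mu_1$ mixing disappears and the $x_1$-leading coefficient together with the induction hypothesis for $\eta(n-1,k-1)$ forces each scalar to vanish.

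\textbf{Remaining technical point.} The one step that I expect to require genuine care is that the $x_1$-leading-coefficient operation does not respect membership in $\langle e_1,\dots,e_n\rangle$ (for instance $e_1=x_1+\cdots$ has $x_1$-leading coefficient $1$), so one cannot simply pass to the quotient and then read off leading terms. I would handle this by working inside the free module $\gamma_T(\mathbf{QI_m})$ rather than its quotient: fix a homogeneous $\Lambda_n$-basis, lift the relation, and prove that the matrix expressing the $Q_T^{\mu;m}$ (for $\mu$ of a fixed size) is triangular with respect to the ordering of the tails, the diagonal entries being the nonvanishing constants of Proposition \ref{property:n-k}(2). This makes the transition determinant nonzero and yields the independence, bypassing the quotient entirely. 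Establishing this triangularity is where the determinantal form $Q_T^{\mu;m}=\det\bigl(R_{T;1,j+1}^{\mu_i;m}\bigr)$ of (\ref{det399}) and the already-proved $k=2$ independence of its entries (the rows being families $\{R_{T;1,j+1}^{p;m}\}_p$ of the type covered by Theorem \ref{basis:thn-1}) do the real work, and I regard it as the technical heart of the argument.
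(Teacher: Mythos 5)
Your outline correctly isolates the central difficulty (partitions with equal tail but different $\mu_1$ contribute at the same top $x_1$-order), but the device you propose to circumvent it does not close. The reduction to a fixed size $|\mu|=c$ is purchased by passing through the quotient $\gamma_T({\bf QI_m}^*)$ via graded Nakayama, and after that passage the relation you must refute is no longer an honest identity $\sum_{|\mu|=c}c_\mu Q_T^{\mu;m}=0$ but a congruence $\sum_{|\mu|=c}c_\mu Q_T^{\mu;m}=\sum_j g_j h_j$ with $g_j\in\Lambda_n^+$ and $h_j$ completely unknown elements of $\gamma_T({\bf QI_m})$; extracting an $x_1$-leading coefficient from such a congruence yields no usable equation, exactly as you concede. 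Your proposed repair --- fix an abstract homogeneous $\Lambda_n$-basis of $\gamma_T({\bf QI_m})$ (whose existence you take from Etingof--Ginzburg) and show the matrix expressing the fixed-size $Q_T^{\mu;m}$ is ``triangular with respect to the ordering of the tails'' --- is not a defined step: the abstract basis carries no tails to order by, you give no mechanism for computing the transition entries, and the closing appeal to the determinantal form together with the $k=2$ case of Theorem \ref{basis:thn-1} does not constitute an argument (linear independence of the entries of a matrix family says nothing, by itself, about independence of the determinants). Since this is, by your own account, the technical heart, the proof is genuinely incomplete; and without the fixed-size reduction the mixing problem you identified stands unresolved, because a homogeneous $\Lambda_n$-relation still superposes all $\mu$ of a given tail size.

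For contrast, the paper resolves the mixing without freeness (freeness is deduced \emph{afterwards}, Corollary \ref{cor:n-k333}) and without ever passing to the quotient: it expands each $f_\mu$ in the basis $\{e_\nu\}$ of $\Lambda_n$ and inducts on the length $r=l(\nu)$, using $x_{k+1}$ rather than $x_1$ as the primary separating variable. The point is Proposition \ref{property:n-k}(3): $\deg_{x_{k+1}}Q_T^{\mu;m}=(k-1)m$ is \emph{independent of} $\mu$, while $\deg_{x_{k+1}}e_\nu=l(\nu)$, so the top $x_{k+1}$-degree of the relation isolates precisely the coefficients $a^\mu_{r,\nu}$ with $l(\nu)=r$, uniformly in $\mu$ --- this is what replaces your fixed-size trick. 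The set $P_{s,n-2,k-1}$ is then split as $Q_{s,n-2,k-1}\sqcup P_{s,n-3,k-1}$: the partitions with $\mu_1=n-2$ are killed by the $x_1$-leading coefficient (your reduction, to $T^1$ of shape $\eta(n-1,k-1)$), and those with $\mu_1\le n-3$ by the $x_{k+1}$-leading coefficient, reducing to $T^{k+1}$ of shape $\eta(n-1,k)$ --- a second reduction, to the \emph{same} column height with one fewer box, which your induction on $k$ alone does not provide and which is needed because the $x_1$-reduction cannot separate those $\mu$; accordingly the paper's induction runs on the number of boxes over all hook shapes simultaneously. If you want to salvage your outline, replace the Nakayama/fixed-size passage by the $e_\nu$-expansion and the induction on $l(\nu)$, and add the $x_{k+1}$-reduction alongside your $x_1$-reduction.
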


\begin{proof}
{\rm
We show this lemma by induction on the size of tableaux.

In the case $k=1$, (\ref{proposition:n-k+1}) is $fQ_{D}^{m}=0$ where $f\in \Lambda_{mem(D)}$.
Therefore proposition holds when $k=1$. We assume that $k\geq 2$.

We recall that $n\geq k$. 
When $n=2$, we have $k=2$. Then l.h.s of (\ref{n-k:independents}) is equal to 
$f_{0}Q_{D}^{0;m}$.
Therefore lemma holds when $n=2$.

Assume that (\ref{n-k:independents}) holds when size is less than $n$ for $n\geq 3$.
We show the case $D=T$ since the proofs of other cases are similar.

We recall that $\Lambda_n$ is a graded ring. Therefore we can decompose 
\begin{equation*}
f_{\mu}=\sum_{l\geq 0}f_{\mu,l}
\end{equation*}
where $f_{\mu,l}\in \Lambda_n^l$.
Thus, (\ref{n-k:independents}) is written as
\begin{equation}
\sum_{\mu\in P(n-2;k-1)}\sum_{l\geq 0}f_{\mu,l}Q_{T}^{\mu;m}=0
\label{n-k:1983}
\end{equation}
where $f_{\mu,l}\in \Lambda_n^l$.
We have $deg(Q_{T}^{\mu;m})=(k-1)nm+|\mu|+k-1$, 
therefore we obtain $deg(f_{\mu,l}Q_{T}^{\mu;m})=(k-1)nm+|\mu|+d+k-1$.

Thus, (\ref{n-k:1983}) is written as
\begin{equation}
\sum_{d\geq 0}\sum_{\mu\in P(n-2;k-1)}f_{\mu,d-(k-1)nm-|\mu|-k+1}Q_{T}^{\mu;m}=0.
\label{n-k:1984}
\end{equation}

Hence, for any $d$ we obtain
\begin{equation}
\sum_{\mu\in P(n-2;k-1)}f_{\mu,d-(k-1)nm-|\mu|-k+1}Q_{T}^{\mu;m}=0.
\label{n-k:1985}
\end{equation}

Fix $d$. Recall that the set $P_{s,n-2,k-1}$ is not empty set if $0\leq s\leq (k-1)(n-k)$.
Let $s$ be an integer such that $0\leq s\leq (k-1)(n-k)$ and 
take $\mu\in P_{s,n-2,k-1}$. Then we have $deg(Q_T^{\mu ;m})=(k-1)nm+\frac{k(k-1)}{2}+s$. 
We set $d'=d-(k-1)nm-\frac{k(k-1)}{2}$.
We express $f_{\mu,d'-s}$ as
\begin{equation*} 
\sum_{r=0}^{d'-s} \sum_{{\scriptstyle |\nu|=d'-s} \atop {\scriptstyle l(\nu)=r}}a^{\mu}_{r,\nu}e_{\nu}.
\end{equation*}

We recall that
\begin{eqnarray*}
&&P_{s,n-2,k-1}=P\Bigl( s+\frac{(k-1)(k-2)}{2};n-2;k-1 \Bigr) \\
&&P_{n-2,k-1}=P\Bigl( n-2;k-1 \Bigr) \\
&&Q_{s,n-2,k-1}=Q\Bigl( s+\frac{(k-1)(k-2)}{2};n-2;k-1 \Bigr).
\end{eqnarray*}
Therefore (\ref{n-k:1985}) is written as
\begin{equation}
\sum_{s=0}^{(k-1)(n-k)}\sum_{\mu \in P_{s,n-2,k-1}}
\sum_{r=0}^{d'-s}
\sum_{{\scriptstyle |\nu|=d'-s} \atop {\scriptstyle l(\nu)=r}}a^{\mu}_{r,\nu}e_{\nu}Q_{T}^{\mu;m}=0.
\label{the:n-2}
\end{equation}

We show $a^{\mu}_{r,\nu}=0$ for $r\geq 0$. We show this by induction on $r$.
To prove this, we consider the highest degree terms in $x_{k+1}$.

As a polynomial in $x_{k+1}$, the degree of l.h.s. of ($\ref{the:n-2}$) is $(k-1)m+d'$
and is in $a^{(k-2,k-3,\cdots ,0)}_{d',(1^{d'})}
e_{(1^{d'})}Q_{T}^{(k-2,k-3,\cdots ,0);m}$.
Hence we have $a^{(k-2,k-3,\cdots ,0)}_{d',(1^{d'})}=0$.

Thus using the following lemma, we complete the proof of Proposition.\ref{proposition:n-k+1}. 
\begin{lemma}
Let $k$ be an integer such that $k\geq 3$.
We assume that for each integer $l$ such that $2\leq l\leq n-1$ and each tableau $\eta(n-1,l)$,
the statements of Proposition.\ref{proposition:n-k+1} holds.

Let $r$ an integer such that $1\leq r \leq d'-1$.
If we have the following equation:
\begin{equation}
\sum_{s=0}^{(k-1)(n-k)}\sum_{\mu \in P_{s,n-2,k-1}}
\sum_{i=0}^r
\sum_{{\scriptstyle |\nu|=d'-s} \atop {\scriptstyle l(\nu)=i}}a^{\mu}_{i,\nu}e_{\nu}Q_{T}^{\mu;m}=0,
\label{the:n-23}
\end{equation}
then all constants $a^{\mu}_{r,\nu}$ are equal to $0$.
\end{lemma}
Proof of lemma:
We set
\begin{equation*}
I=\sum_{s=0}^{(k-1)(n-k)}\sum_{\mu \in P_{s,n-2,k-1}}
\sum_{i=0}^r
\sum_{{\scriptstyle |\nu|=d'-s} \atop {\scriptstyle l(\nu)=i}}a^{\mu}_{i,\nu}e_{\nu}Q_{T}^{\mu;m}.
\end{equation*}
From Proposition.\ref{property:n-k} (3), we have $deg_{x_{k+1}}(I)=(k-1)m+r$. 
The leading coefficient of $I$ in $x_{k+1}$ is in
\begin{equation*}
\sum_{s=0}^{(k-1)(n-k)}\sum_{\mu\in P_{s,n-2,k-1}}
\sum_{{\scriptstyle |\nu|=d'-s} \atop {\scriptstyle l(\nu)=r}}a^{\mu}_{r,\nu}e_{\nu}Q_{T}^{\mu;m}.
\end{equation*}

Recall that we have $P_{s,n-2,k-1}=Q_{s,n-2,k-1} \cup P_{s,n-3,k-1}$ and 
this union is disjoint. 
Therefore we can rewrite this as
\begin{eqnarray*}
&&\sum_{s=n-k}^{(k-1)(n-k)}\sum_{\mu\in Q_{s,n-2,k-1}}
\sum_{{\scriptstyle |\nu^{(1)}|=d'-s} \atop {\scriptstyle l(\nu^{(1)})=r}}
a^{\mu}_{r,\nu^{(1)}}e_{\nu^{(1)}}Q_{T}^{\mu;m}  \\
&+&\sum_{s=0}^{(k-1)(n-k-1)}\sum_{\mu\in P_{s,n-3,k-1}}
\sum_{{\scriptstyle |\nu^{(2)}|=d'-s} \atop {\scriptstyle l(\nu^{(2)})=r}}
a^{\mu}_{r,\nu^{(2)}}e_{\nu^{(2)}}Q_{T}^{\mu;m}.
\end{eqnarray*}

We set 
\begin{eqnarray*}
I_1&=&\sum_{s=n-k}^{(k-1)(n-k)}\sum_{\mu\in Q_{s,n-2,k-1}}
\sum_{{\scriptstyle |\nu^{(1)}|=d'-s} \atop {\scriptstyle l(\nu^{(1)})=r}}
a^{\mu}_{r,\nu^{(1)}}e_{\nu^{(1)}}Q_{T}^{\mu;m}   \\
I_2&=&\sum_{s=0}^{(k-1)(n-k-1)}\sum_{\mu\in P_{s,n-3,k-1}}
\sum_{{\scriptstyle |\nu^{(2)}|=d'-s} \atop {\scriptstyle l(\nu^{(2)})=r}}
a^{\mu}_{r,\nu^{(2)}}e_{\nu^{(2)}}Q_{T}^{\mu;m}.
\end{eqnarray*}
First, we show that the constants $a^{\mu}_{r,\nu}$ in $I_1$ are equal to $0$. 

If $r>d'-n+k$, we have $|\mu|<\frac{(k-1)(k-2)}{2}+n-k$. 
On the other hand, if $\mu\in Q_{s,n-2,k-1}$, 
we have $|\mu|\geq \frac{(k-1)(k-2)}{2}+n-k$. Therefore if $r>d'-n+k$, 
the sum in $I_1$ is empty.
We only needs to consider the case when $r\leq d'-n+k$.

We define the following notions.

Let $X=\{ s_1,s_2,\cdots ,s_n \}$ be the set of $n$ positive integers.
For a partition $\nu=(\nu_1,\nu_2,\cdots)$,
we define 
\begin{eqnarray*}
e_{X,i}&=&\sum_{1\leq l_1< \cdots < l_i\leq n}x_{s_{l_1}}\cdots x_{s_{l_i}} \\
e_{X,\nu}&=&\prod_ie_{X,\nu_i} \\
e_{X,i}^{(s_j)}&=&e_i(x_{s_1},\cdots ,x_{s_{j-1}},x_{s_{j+1}},\cdots ,x_{s_n}) \\
e_{X,\nu}^{(s_j)}&=&\prod_{s_i}e_{X,\nu_i}^{(j)}.
\end{eqnarray*}
In particular, if $X=\{ 1,2,\cdots ,n \}$, then we simply write $e_{X,i}^{(j)}$ as $e_{i}^{(j)}$ and
$e_{X,\nu}^{(j)}$ as $e_{\nu}^{(j)}$.

When $r\leq d'-n+k$, 
the terms of the highest degree of $I$ in $x_1$ are in $I_1$. 
For $\mu\in Q_{s,n-2,k-1}$, there exists $\mu'=(\mu'_1,\cdots ,\mu'_{k-2})\in P_{n-3,k-2}$ 
such that $\mu=(n-2,\mu'_1,\cdots ,\mu'_{k-2})$.
In particular, we have $\mu'\in P_{s+k-n,n-3,k-2}$. 
The leading coefficient of $I_1$ in $x_1$ is 
\begin{equation*}
\sum_{s=n-k}^{(k-1)(n-k)}\sum_{\mu'\in P_{s+k-n,n-3,k-2}}
\sum_{{\scriptstyle |\nu^{(1)}|=d'-s} \atop {\scriptstyle l(\nu^{(1)})=r}}
b^{\mu'}_{\nu^{(1)}}e_{\nu^{(1)}-(1^{r})}^{(1)}Q_{T^1}^{\mu';m}
\end{equation*}
where we set $b_{\mu',\nu^{(1)}}
=\frac{(-1)^{(k-1)m+1}m!}{\prod_{s=0}^m(mn+n-1-s)}a^{(n-2,\mu'_1,\cdots)}_{r,\nu^{(1)}}$.
We can rewrite this as
\begin{equation*}
\sum_{s=0}^{(k-2)(n-k)}\sum_{\mu'\in P_{s,n-3,k-2}}
\sum_{{\scriptstyle |\nu^{(1)}|=d'-s+k-n} \atop {\scriptstyle l(\nu^{(1)})=r}}
b^{\mu'}_{\nu^{(1)}}e_{\nu^{(1)}-(1^{r})}^{(1)}Q_{T^1}^{\mu';m}.
\end{equation*}

Since $e_{\nu^{(1)}-(1^{r})}^{(1)}=e_{mem(T^1), \nu^{(1)}-(1^{r})}$, this is rewritten as
\begin{equation*}
\sum_{s=0}^{(k-2)(n-k)}\sum_{\mu'\in P_{s,n-3,k-2}}
\sum_{{\scriptstyle |\nu^{(1)}|=d'-s+k-n} \atop {\scriptstyle l(\nu^{(1)})=r}}
b^{\mu'}_{\nu^{(1)}}e_{mem(T^1), \nu^{(1)}-(1^{r})}Q_{T^1}^{\mu';m}.
\end{equation*}
The shape of $T^1$ is $(n-k+1,1^{k-2})$. Thus $T^1$ has $n-1$ boxes.
By assumption on $n$, all $b^{\mu'}_{\nu^{(1)}}$ are equal to $0$.
Thus we have $a^{(n-2,\mu'_1,\cdots)}_{r,\nu^{(1)}}=0$, hence we get $I_1=0$.

We next consider $I_2$. 
The leading coefficient of $I_2$ in $x_{k+1}$ is
\begin{equation}
\sum_{s=0}^{(k-1)(n-k-1)}\sum_{\mu\in P_{s,n-3,k-1}}
\sum_{{\scriptstyle |\nu^{(2)}|=d'-s} \atop {\scriptstyle l(\nu^{(2)})=r}}
c^{\mu}_{\nu^{(2)}}e_{\nu^{(2)}-(1^{r})}^{(k+1)}
Q_{T^{k+1}}^{\mu;m}
\label{eq:98765}
\end{equation}
where we set $c^{\mu}_{\nu^{(2)}}=(-1)^{(k-2)m}a^{\mu}_{r,\nu^{(2)}}$.

Since $e_{\nu^{(2)}-(1^{r})}^{(k+1)}=e_{mem(T^{k+1}), \nu^{(2)}-(1^{r})}$, we can rewrite (\ref{eq:98765}) as
\begin{equation*}
\sum_{s=0}^{(k-1)(n-k-1)}\sum_{\mu\in P_{s,n-3,k-1}}
\sum_{{\scriptstyle |\nu^{(2)}|=d'-s} \atop {\scriptstyle l(\nu^{(2)})=r}}
c^{\mu}_{\nu^{(2)}}e_{mem(T^{k+1}), \nu^{(2)} -(1^{r})}
Q_{T^{k+1}}^{\mu;m}.
\end{equation*}
The tableau $T^{k+1}$ has $n-1$ boxes.
By assumption on $n$, all $c^{\mu}_{\nu^{(2)}}$ are equal to $0$. 
Thus, all $a^{\mu}_{r,\nu}$ are equal to $0$.

Thus lemma follows. Therefore proposition also follows. $\Box$ }
\end{proof}

From Theorem.\ref{basis:n-k+1} and Proposition.\ref{proposition:n-k+1}, we obtain the following corollary.
\begin{corollary}
Let $T\in ST(\eta(n,k))$.
$\gamma_{T}({\bf QI_m})$ is free module over $\Lambda_n$ and 
$\bigl\{ Q_{T}^{\mu;m} \bigr\}_{\mu\in P(n-2;k-1)}$ 
is a free basis. 
\label{cor:n-k333}
\end{corollary}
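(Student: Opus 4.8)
The plan is to show that $\bigl\{ Q_{T}^{\mu;m} \bigr\}_{\mu\in P(n-2;k-1)}$ is simultaneously a $\Lambda_n$-generating set for $\gamma_T({\bf QI_m})$ and $\Lambda_n$-linearly independent; once both hold, it is by definition a free basis and $\gamma_T({\bf QI_m})$ is a free $\Lambda_n$-module. Two of the three ingredients are already available. First, each $Q_T^{\mu;m}$ lies in $\gamma_T({\bf QI_m})$ by Proposition.\ref{property:n-k} (parts (1) and (4)) combined with the description of $\gamma_T({\bf QI_m})$ in Lemma.\ref{basis:lemn-1}. Second, linear independence over $\Lambda_n$ is exactly Proposition.\ref{proposition:n-k+1} specialized to $D=T$, since $mem(T)=\{1,\dots,n\}$ gives $\Lambda_{mem(T)}=\Lambda_n$. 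Hence the only remaining task is to prove that these elements generate $\gamma_T({\bf QI_m})$ as a $\Lambda_n$-module, which I would obtain from a graded Nakayama argument fed by Theorem.\ref{basis:n-k+1}.

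To set up Nakayama, I would write $R=\Lambda_n=\mathbb{Q}[e_1,\dots,e_n]$, a non-negatively graded polynomial ring with $R_0=\mathbb{Q}$ and irrelevant ideal $R_+=\langle e_1,\dots,e_n\rangle$, and set $M=\gamma_T({\bf QI_m})$. The key identification is
\begin{equation*}
M/R_+M \;\cong\; \gamma_T({\bf QI_m}^*).
\end{equation*}
This uses that $\gamma_T$ commutes with multiplication by symmetric polynomials: by the proposition stating $f(PQ)=Pf(Q)$ for $P\in\Lambda_n$ and $f\in\mathbb{Q}S_n$, every summand $\gamma_{T'}({\bf QI_m})$ in the decomposition of Lemma.\ref{basis:lemn-1} is $\Lambda_n$-stable, so ${\bf QI_m}=\bigoplus_{T'}\gamma_{T'}({\bf QI_m})$ is a decomposition of $\Lambda_n$-modules. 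Consequently $R_+{\bf QI_m}=\bigoplus_{T'}R_+\gamma_{T'}({\bf QI_m})$, and passing to ${\bf QI_m}^*={\bf QI_m}/R_+{\bf QI_m}$ respects the direct sum; intersecting with the $T$-summand yields exactly $M/R_+M=\gamma_T({\bf QI_m}^*)$. By Theorem.\ref{basis:n-k+1} the images of the $Q_T^{\mu;m}$ then form a $\mathbb{Q}$-basis of this quotient.

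Next I would invoke the graded Nakayama lemma. The module $M$ is a submodule of the polynomial ring $K_n$, hence non-negatively graded with finite-dimensional homogeneous components, and the $Q_T^{\mu;m}$ are homogeneous by Proposition.\ref{property:n-k}(2). Since their residues span $M/R_+M$ over $\mathbb{Q}=R/R_+$, a degree induction shows that they generate $M$ over $R$: given a homogeneous element of $M$, subtract a $\mathbb{Q}$-combination of the $Q_T^{\mu;m}$ of the same degree to land in $R_+M$, then factor out an element of $R_+$ and recurse on the strictly smaller degree. This gives generation, and together with the independence from Proposition.\ref{proposition:n-k+1} shows that $\bigl\{Q_T^{\mu;m}\bigr\}$ is a free $\Lambda_n$-basis; in particular $\gamma_T({\bf QI_m})$ is a free $\Lambda_n$-module.

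The genuinely hard content, namely Theorem.\ref{basis:n-k+1} and the independence Proposition.\ref{proposition:n-k+1}, is already established, so for this corollary the only points demanding care are structural: verifying the compatibility of the idempotent $\gamma_T$ with the $\Lambda_n$-action so that the isotypic decomposition descends to ${\bf QI_m}^*$, and checking that the grading hypotheses for Nakayama are met (non-negative grading, finite-dimensional graded pieces, homogeneity of the $Q_T^{\mu;m}$). I expect no essential obstacle beyond this bookkeeping; the corollary is a formal consequence of the two main results together with graded Nakayama.
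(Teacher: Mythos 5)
Your proposal is correct and in substance coincides with the paper's own proof: the paper likewise takes $\Lambda_n$-independence from Proposition \ref{proposition:n-k+1} and proves generation by exactly the degree induction you package as graded Nakayama (correct a homogeneous $f\in\gamma_T({\bf QI_m})[d]$ by an element $g$ of the $\mathbb{Q}$-span of the $Q_T^{\mu;m}$ using Theorem \ref{basis:n-k+1}, write $f-g=\sum_{s\geq 1}A_s u_{s,d-s}$ with $u_{s,d-s}\in\gamma_T({\bf QI_m})$ of degree $<d$, and recurse). The only cosmetic differences are that you spell out the $\Lambda_n$-equivariance of $\gamma_T$ and the identification $M/R_+M\cong\gamma_T({\bf QI_m}^*)$, which the paper uses tacitly when it places the $u_s$ inside $\gamma_T({\bf QI_m})$, and that you bypass the Hilbert-series determination of the bottom degree, since Nakayama's base case ($R_+M$ vanishes in the minimal degree) makes it unnecessary.
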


\begin{proof}
{\rm
In this proof, we simply write $Q_{T}^{\mu;m}$ as $Q^{\mu}$. 
Using Proposition.\ref{proposition:n-k+1}, $\bigl\{ Q^{\mu} \bigr\}$ are free over $\Lambda_n$. 

Since $\displaystyle{H(\gamma_T({\bf QI_m}^*); t)=
\sum_{s=0}^{(k-1)(n-k)}t^{(k-1)nm+\frac{k(k-1)}{2}}\sum_{s=0}^{(k-1)(n-k)}
p_{s,n-2,k-1} t^s}$, 
we have 
\begin{equation*}
\gamma_{T}({\bf QI_m})=\bigoplus_{d\geq (k-1)nm+\frac{k(k-1)}{2}}\gamma_{T}({\bf QI_m})[d]. 
\end{equation*}
Let $d$ be a non-negative integer such that $d\geq (k-1)nm+\frac{k(k-1)}{2}$. 
We show that the subspace of $\gamma_{T}({\bf QI_m})[d]$ is generated by 
$\bigl\{ Q^{\mu} \bigr\}$ over $\Lambda_n$. We show this by induction on $d$.

When $d=(k-1)nm+\frac{k(k-1)}{2}$, the coefficient of $t^{(k-1)nm+\frac{k(k-1)}{2}}$ 
in $H(\gamma_T({\bf QI_m}^*); t)$ is equal to $1$.
Therefore, $\gamma_{T}({\bf QI_m})[d]$ is a space spanned by $Q^{(k-2,k-1,\cdots ,0)}$. 
Thus the statement follows in the case $d=(k-1)nm+\frac{k(k-1)}{2}$. 

When $d\geq (k-1)nm+\frac{k(k-1)}{2}+1$, we assume that the statement holds with all numbers less than $d$. 
We denote by $V$ the vector space over $\mathbb{Q}$ spanned by $\{ Q^{\mu} \}_{\mu\in P(n-2;k-1)}$.

Take $f\in \gamma_{T}({\bf QI_m})[d]$. Since Theorem.\ref{basis:thn-1},
we can find $g\in V[d]$ such that $[f]=[g]$ in $\gamma_{T}({\bf QI_m}^*)$. 
Thus, we have $f-g\in \langle e_1,\cdots,e_n\rangle $. 
This is expressed as 
\begin{equation*}
f-g=\sum_{s\geq 1}A_su_s
\end{equation*}
where $A_s\in \Lambda_n^s$ and $u_s\in \gamma_{T}(\bf {QI_m})$. 

Since $\gamma_{T}(\bf {QI_m})$ is a graded space,
we can decompose $u_s=\sum_{i\geq 0}u_{s,i}$ where $u_{s,i}\in \gamma_{T}({\bf {QI_m}})[i]$.
We have $deg(A_su_{s,i})=s+i$. 
Thus, we have 
\begin{equation*}
f-g=\sum_{l\geq 0}\sum_{s+i=l}A_s u_{s,i}.
\end{equation*}
Since $f-g\in \gamma_{T}({\bf QI_m})[d]$, 
we get $\displaystyle{\sum_{l\neq d}\sum_{s+i=l}A_s u_{s,i}=0}$. 
Therefore, we have 
\begin{equation*}
f-g=\sum_{s\geq 1}A_s u_{s,d-s}.
\end{equation*}

$A_s$ has degree at least $1$, therefore $u_{s,d-s}$ has the degree less than $d$. 
By induction assumption, $u_{s,d-s}$ can be expressed as 
\begin{equation*}
u_{s,d-s}=\sum_{l}B_l v_l
\end{equation*}
where $B_l\in \Lambda_n$ and $v_l\in V$.
Thus, the statement follows. $\Box$
}
\end{proof}

\section{The operator $L_m$}
The operator $L_m$ is defined as

\begin{equation*}
L_m=\sum_{i=1}^n\frac{\partial^2}{\partial x_i^2}-2m\sum_{1\leq i < j\leq n}
\frac{1}{x_i-x_j}\bigl( \frac{\partial}{\partial x_i} - \frac{\partial}{\partial x_j} \bigr)
\end{equation*}

This operator is discussed in \cite{etingof} and \cite{feigin}. It is related the quasiinvariants.
In \cite{feigin} Feigin and Veselov showed that the operator $L_m$ preserves ${\bf QI_m}$.
We consider how $L_m$ acts on our polynomial $Q_{T}^{\mu;m}$.
In \cite{bandlow}, for $T(1,2)$  
Bandlow and Musiker showed that the following formulas for the action of $L_m$.

\begin{theorem}[\cite{bandlow}]
Let $k,m$ be non-negative integers. 

Then we have $L_m(Q_{T(1,2)}^{k;m})=k(k-1)Q_{T(1,2)}^{k-2;m}$ for $k\geq 2$ 
and $L_m(Q_{T(1,2)}^{k;m})=0$ for $k=0,1$.
\end{theorem}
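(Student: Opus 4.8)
The plan is to differentiate under the integral sign while carefully tracking the boundary contributions that arise because $x_1$ and $x_2$ appear both inside the integrand and as the limits of integration. Throughout set $F(t)=\prod_{i=1}^n(t-x_i)^m$ and $\Psi(t)=t^kF(t)$, so that $Q_{T(1,2)}^{k;m}=\int_{x_1}^{x_2}\Psi(t)\,dt$. The first step is to record the logarithmic derivatives of the integrand in the $x$-variables: from $\partial_{x_i}\log F=-m/(t-x_i)$ one gets $\partial_{x_i}\Psi=-\frac{m}{t-x_i}\Psi$, then $\partial_{x_i}^2\Psi=\frac{m(m-1)}{(t-x_i)^2}\Psi$, and $\frac{1}{x_i-x_j}(\partial_{x_i}-\partial_{x_j})\Psi=-\frac{m}{(t-x_i)(t-x_j)}\Psi$.

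The crux of the argument is to evaluate the action of $L_m$ on $\Psi$ with $t$ held fixed as a parameter. Substituting the formulas above and using the partial-fraction identity $\bigl(\sum_i(t-x_i)^{-1}\bigr)^2=\sum_i(t-x_i)^{-2}+2\sum_{i<j}\frac{1}{(t-x_i)(t-x_j)}$, all the double sums collapse; writing $S_1=\sum_i(t-x_i)^{-1}$ and $S_2=\sum_i(t-x_i)^{-2}=-\partial_tS_1$ I expect to obtain the clean identity $L_m\Psi=(m^2S_1^2-mS_2)\Psi=t^k\,\partial_t^2F$, the last equality because $\partial_tF=mS_1F$ gives $\partial_t^2F=(m^2S_1^2-mS_2)F$. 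This is the key structural fact: applying the $x$-Laplacian-type operator to this Baker--Akhiezer-style integrand reproduces a pure $t$-derivative.

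Next I would convert $t^k\partial_t^2F$ into $t$-derivatives of $\Psi$ via $\partial_t^2\Psi=k(k-1)t^{k-2}F+2kt^{k-1}\partial_tF+t^k\partial_t^2F$, integrate over $[x_1,x_2]$, and integrate $\int t^{k-1}\partial_tF\,dt$ by parts. For $m\ge1$ one has $F(x_1)=F(x_2)=0$, so all purely-$F$ boundary terms drop and I expect $\int_{x_1}^{x_2}L_m\Psi\,dt=(\partial_t\Psi)|_{x_2}-(\partial_t\Psi)|_{x_1}+k(k-1)Q_{T(1,2)}^{k-2;m}$.

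The part that needs the most care --- and the main obstacle --- is that $L_mQ_{T(1,2)}^{k;m}$ is not $\int_{x_1}^{x_2}L_m\Psi\,dt$: because $x_1,x_2$ are the integration limits, the operators $\partial_{x_1}^2$ and $\partial_{x_2}^2$ produce extra boundary terms. Applying the Leibniz rule twice gives the boundary contribution $-(\partial_t\Psi)|_{x_1}-2(\partial_{x_1}\Psi)|_{x_1}+(\partial_t\Psi)|_{x_2}+2(\partial_{x_2}\Psi)|_{x_2}$ (the first-order part $-2m\sum\cdots$ contributes no boundary term, since its limit pieces involve $\Psi(x_i)=0$ for $m\ge1$). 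Adding this to the surviving $\pm(\partial_t\Psi)$ terms above, I would invoke the chain-rule identity $(\partial_t\Psi+\partial_{x_i}\Psi)|_{t=x_i}=\frac{d}{dx_i}\bigl[\Psi(x_i;x_1,\dots,x_n)\bigr]=0$, valid because $\Psi$ restricted to $t=x_i$ vanishes identically in $x_i$; this forces the whole boundary contribution to cancel and leaves $L_mQ_{T(1,2)}^{k;m}=k(k-1)Q_{T(1,2)}^{k-2;m}$. For $k=0,1$ the coefficient $k(k-1)$ vanishes and the same cancellation yields $0$; the remaining case $m=0$ is elementary, since then $Q_{T(1,2)}^{k;m}=(x_2^{k+1}-x_1^{k+1})/(k+1)$ depends only on $x_1,x_2$ and $L_0=\sum_i\partial_{x_i}^2$ gives the result by direct computation.
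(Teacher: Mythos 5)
Your proof is correct, and it takes a genuinely different route from the paper. The paper states this theorem as a quoted result of Bandlow--Musiker and gives no proof of it; the nearest argument it contains is the proof of the generalization (Theorem \ref{thmLm}), which runs through a lemma consisting of the product rule for $L_m$, the integration-by-parts recursion $k\int_{x_i}^{x_j}t^{k-1}\prod_{s}(t-x_s)^m\,dt=-m\sum_{r}\int_{x_i}^{x_j}t^{k}(t-x_r)^{m-1}\prod_{s\neq r}(t-x_s)^m\,dt$, and an induction on $k-l$ for the cross terms --- purely recursive manipulations of the integrals. You instead prove the structural intertwining identity $L_m\bigl(t^kF(t)\bigr)=t^k\,\partial_t^2F(t)$ for $F(t)=\prod_{i=1}^n(t-x_i)^m$, which I verified: with $S_1=\sum_i(t-x_i)^{-1}$ and $S_2=\sum_i(t-x_i)^{-2}$ one indeed gets $L_m\Psi=\bigl(m(m-1)S_2+m^2(S_1^2-S_2)\bigr)\Psi=(m^2S_1^2-mS_2)\Psi=t^k\partial_t^2F$. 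Your bookkeeping of the variable limits is also right: the two-fold Leibniz rule produces exactly $-(\partial_t\Psi)|_{x_1}-2(\partial_{x_1}\Psi)|_{x_1}+(\partial_t\Psi)|_{x_2}+2(\partial_{x_2}\Psi)|_{x_2}$, the first-order part of $L_m$ contributes no boundary pieces since $\Psi(x_i)=0$ for $m\geq 1$, and the total boundary contribution assembles into $2\bigl[(\partial_t+\partial_{x_2})\Psi\bigr]_{t=x_2}-2\bigl[(\partial_t+\partial_{x_1})\Psi\bigr]_{t=x_1}=0$ by your chain-rule identity (which is the crucial point at $m=1$, where the individual boundary terms do not vanish); the surviving $\int t^{k-2}F$ terms give $-k(k-1)+2k(k-1)=k(k-1)$ times $Q_{T(1,2)}^{k-2;m}$ as claimed, and the $m=0$ and $k\in\{0,1\}$ cases are correctly disposed of. What your approach buys is a conceptual, induction-free explanation in the spirit of the Baker--Akhiezer/Felder--Veselov integral representations: the integrand intertwines $L_m$ with $\partial_t^2$, so the formula is just integration by parts in $t$. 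What the paper's recursive lemma buys is exactly what your single-integral argument does not directly give: in the determinantal generalization $Q_T^{\mu;m}$, products of two integrals over different rows occur and the cross-term identity of the lemma (part (3), proved by induction on $k-l$) is needed, so the two methods are complementary rather than interchangeable.
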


We extend this formulas.
We set $T=T(1,2,\cdots ,k)$.
To write formulas simply, we define the following polynomials.

\begin{definition}
{\rm 
Let $\alpha =(\alpha_1,\alpha_2,\cdots ,\alpha_{k-1})\in \mathbb{Z}^{k-1}$. 

We define a polynomial $Q_{T}^{\alpha;m}$ as follows:
\begin{equation}
Q_{T}^{\alpha;m}=
\begin{vmatrix}
R_{T;1,2}^{\alpha_1;m}     & R_{D;1,2}^{\alpha_2;m}     & \cdots & R_{T;1,2}^{\alpha_{k-1};m} \\
R_{T;2,3}^{\alpha_1;m}     & R_{T;2,3}^{\alpha_2;m}     & \cdots & R_{T;2,3}^{\alpha_{k-1};m} \\
  \vdots               &         \vdots         & \ddots &        \vdots          \\
R_{T;k-1,k}^{\alpha_1;m}   & R_{T;k-1,k}^{\alpha_2;m}   & \cdots & R_{T;k-1,k}^{\alpha_{k-1};m}
\end{vmatrix} 
\label{basis:n-k}
\end{equation}
when $\alpha_i\geq 0$ $i=1,\cdots ,k-1$. Otherwise we define $Q_{T}^{\alpha;m}=0$.}
\end{definition}
\begin{remark}
{\rm If $\alpha$ is a partition, 
$Q_{T}^{\alpha;m}$ is equal to a polynomial defined in Definition.\ref{basis:n-k}.
If $\alpha\in \mathbb{Z}_{\geq 0}^{k-1}$, 
$Q_T^{\alpha;m}$ is equal to $Q_T^{\mu;m}$ up to a sign where $\mu$ is a partition sorted $\alpha$.
}
\end{remark}

We obtain  the following formulas for the action of $L_m$.
To write the formula simply, 
for $\alpha =(\alpha_1,\alpha_2,\cdots ,\alpha_{k-1})\in \mathbb{Z}^{k-1}$ we define 
\begin{equation*}
\alpha^{(i,j)}=(\alpha_1 ,\cdots ,\alpha_{i-1},\alpha_i -1,\alpha_{i+1},
\cdots ,\alpha_{j-1}, \alpha_j -1,\alpha_{j+1},\cdots,\alpha_n).
\end{equation*}

\begin{theorem}
Let $\alpha =(\alpha_1,\cdots ,\alpha_{k-1}) \in \mathbb{Z}^{k-1}$ and take $T\in ST(\eta(n,k))$.
Then we have 
\begin{eqnarray*}
L_m(Q_T^{\alpha;m})
&=&\sum_{i=1}^n\alpha_i (\alpha_i -1)
Q_T^{(\alpha_1 ,\cdots ,\alpha_i -2,\cdots ,\alpha_n);m} 
+2m\sum_{1\leq i < j\leq n} \bigl(-\alpha_i Q_T^{\alpha^{(i,j)};m} \\
&+&\sum_{{\scriptstyle \alpha_i -2\geq s>t \geq 0} \atop {\scriptstyle s+t=\alpha_i+\alpha_j-2}}
(s-t)Q_T^{(\alpha_1 ,\cdots ,\alpha_{i-1},s,\alpha_{i+1},
\cdots ,\alpha_{j-1},t,\alpha_{j+1},\cdots,\alpha_n);m} \bigr).
\end{eqnarray*}
\label{thmLm}
\end{theorem}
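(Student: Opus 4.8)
The plan is to compute $L_m(Q_T^{\alpha;m})$ by writing $L_m=\Delta+D$ with $\Delta=\sum_i\partial_i^2$ and $D=-2m\sum_{i<j}(x_i-x_j)^{-1}(\partial_i-\partial_j)$, and expanding the defining determinant as a signed sum of products of the entries $R_{T;r,r+1}^{\alpha_j;m}$. Since $D$ is first order it acts as a derivation, while $\Delta(fg)=(\Delta f)g+f(\Delta g)+2\sum_\ell(\partial_\ell f)(\partial_\ell g)$; hence on each monomial $\prod_r R_{T;r,r+1}^{\alpha_{\pi(r)};m}$ the operator $L_m$ produces a \emph{single-factor} part, where $L_m$ hits one entry, and a \emph{cross} part $2\sum_\ell(\partial_\ell R)(\partial_\ell R')$ pairing two distinct entries. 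Reassembling by multilinearity, the single-factor part becomes $\sum_j\alpha_j(\alpha_j-1)Q_T^{(\ldots,\alpha_j-2,\ldots);m}$ once I invoke the base case $L_m R_{T;a,b}^{p;m}=p(p-1)R_{T;a,b}^{p-2;m}$ (the $k=2$ theorem together with the $S_n$-symmetry of $L_m$, which lowers the exponent of one column by $2$). This gives the first sum in the statement.

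The heart of the proof is the cross part, which via multilinearity and Laplace expansion reduces to evaluating, for each pair of exponents $(p,q)$ and each pair of rows $(r),(r')$, the bilinear form $B:=\sum_\ell(\partial_\ell R_{(r)}^{p})(\partial_\ell R_{(r')}^{q})$. Because the integrand vanishes at the endpoints for $m\ge1$, differentiation under the integral sign gives the uniform formula $\partial_\ell R_{a,b}^{p}=-m\int_{x_a}^{x_b}t^pG(t)/(t-x_\ell)\,dt$ with $G(t)=\prod_l(t-x_l)^m$, valid even when $x_\ell$ is a limit of integration. Naively $B$ is of order $m^2$, but the logarithmic-derivative identity $G(t)\sum_\ell(t-x_\ell)^{-1}=\tfrac1m G'(t)$, applied after the partial-fraction split $\frac{1}{(t-x_\ell)(s-x_\ell)}=\frac{1}{s-t}\bigl(\frac{1}{t-x_\ell}-\frac{1}{s-x_\ell}\bigr)$, collapses the sum over $\ell$ and leaves
\[
B(R_{(r)}^{p},R_{(r')}^{q})=m\int_{(r)}\!\int_{(r')}\frac{t^ps^q\bigl(G(t)G'(s)-G'(t)G(s)\bigr)}{t-s}\,dt\,ds,
\]
an expression of order $m$, matching the overall factor $2m$ in the theorem. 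I expect this collapse of $m^2$ to $m$ to be the main obstacle to carry out cleanly, since it is precisely what forces the answer into the required shape.

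From here the argument is bookkeeping. Forming the antisymmetric combination dictated by the determinant, $B(R_{(r)}^p,R_{(r')}^q)-B(R_{(r)}^q,R_{(r')}^p)$, the kernel becomes the honest polynomial $\frac{t^ps^q-t^qs^p}{t-s}=\sum_u t^us^{p+q-1-u}$; integrating the two factors separately and using the integration-by-parts identity $\int_{(r)}t^wG'(t)\,dt=-w\,R_{(r)}^{w-1}$ converts each term into a product $R_{(r)}^{u}R_{(r')}^{v}$ with $u+v=p+q-2$. Collecting these products, the interior of the summation range yields exactly the terms $(s-t)Q_T^{(\ldots,s,\ldots,t,\ldots);m}$ with $s+t=\alpha_i+\alpha_j-2$, while the two ends of the range combine into the single term proportional to $Q_T^{\alpha^{(i,j)};m}$. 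Finally I would lift these $2\times2$ minors back to full $(k-1)\times(k-1)$ determinants using multilinearity and the antisymmetry of $Q_T^{\bullet;m}$ in its exponent vector (swapping two exponents flips the sign, equal exponents annihilate it); this is the step that turns the row/column-local identities into the stated identity among the $Q_T^{\beta;m}$, and it is where all the signs and the exact coefficient of $Q_T^{\alpha^{(i,j)};m}$ must be tracked with care.
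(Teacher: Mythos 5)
Your proposal is correct, and it follows the paper's skeleton in its outer layer --- the same Leibniz decomposition $L_m(fg)=L_m(f)g+fL_m(g)+2\sum_i(\partial_i f)(\partial_i g)$, the same use of the $k=2$ theorem plus $S_n$-symmetry to get $L_m(R_{T;a,b}^{p;m})=p(p-1)R_{T;a,b}^{p-2;m}$ for the single-entry terms, and the same final assembly of $2\times 2$ cross-term identities into the full determinant --- but you prove the crucial cross-term identity, part (3) of the paper's closing lemma (equation (\ref{eq-Lm})), by a genuinely different route. The paper argues by induction on $k-l$, checking $k-l=1,2$ by hand via the substitutions $t^k=t^{k-1}\bigl((t-x_i)+x_i\bigr)$ and $x_i=u-(u-x_i)$ together with part (2) of that lemma. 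You instead evaluate the bilinear form in closed form: the partial-fraction split and the logarithmic-derivative collapse $\sum_\ell G(t)/(t-x_\ell)=G'(t)/m$ reduce the naive order $m^2$ to the required single factor of $m$, the antisymmetrized kernel $(t^ps^q-t^qs^p)/(t-s)=\sum_{u=q}^{p-1}t^u s^{p+q-1-u}$ is polynomial, and $\int t^wG'(t)\,dt=-wR^{w-1}$ (exactly the paper's part (2)) converts everything to products of $R$'s; pairing $w$ with $p+q-2-w$ reproduces the interior $(s-t)$ terms, while the endpoints $w=p-1$, $w=q$ give $-q\,R^{p-1,q-1}$. Your method buys a one-shot, non-inductive derivation, uniform in the row pair, producing all coefficients at once; the paper's induction buys elementary one-variable manipulations at the cost of two base cases and a recursion. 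Two small additions: your manipulations need $m\geq 1$, so $m=0$ requires the separate trivial check the paper makes; and your coefficient caution is apt --- your endpoint terms give $-\min(\alpha_i,\alpha_j)$, agreeing with the lemma's $-l$ (for $k>l$), so the theorem's printed $-\alpha_i$ is evidently a misprint for $-\alpha_j$.
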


This follows from following lemma.
We define a polynomial $R_{T;1,2,3}^{s,t;m}$ as
\begin{equation*}
R_{T;1,2,3}^{s,t;m}=
\begin{vmatrix}
R_{T;1,2}^{s;m}     & R_{T;1,2}^{t;m}   \\
R_{T;2,3}^{s;m}     & R_{T;2,3}^{t;m}  
\end{vmatrix} 
.
\end{equation*}

\begin{lemma}
{\rm(1)} we have
\begin{equation*}
L_m(fg)=L_m(f)g+fL_m(g)+
2\sum_{i=1}^n\bigl( \frac{\partial}{\partial x_i}f \bigr) \bigl( \frac{\partial}{\partial x_i}g\bigr). 
\end{equation*} \\
{\rm(2)}
Let $k$ be a non-negative integer and $m$ be a non-negative integer. Then we have 
\begin{equation*}
k\int_{x_i}^{x_j}t^{k-1}\prod_{s=1}^n(t-x_s)^m
=-m\sum_{r=1}^n\int_{x_i}^{x_j}t^k(t-x_r)^{m-1}\prod_{s\neq r}(t-x_s)^m dt.
\end{equation*} \\
{\rm(3)}
Let $k,l$ be non-negative integers such that $k>l$. Then we have
\begin{align}
&\sum_{i=1}^n\bigl( \frac{\partial}{\partial x_i}R_{T;1,2}^{k;m} \bigr) 
\bigl( \frac{\partial}{\partial x_i}R_{T;1,3}^{l;m}\bigr)
-\bigl( \frac{\partial}{\partial x_i}R_{T;1,3}^{k;m} \bigr) 
\bigl( \frac{\partial}{\partial x_i}R_{T;1,2}^{l;m}\bigr) \\
&=m \bigl(-l R_{T;1,2,3}^{k-1,l-1;m}
+\sum_{{\scriptstyle k-2\geq s>t \geq 0} \atop {\scriptstyle s+t=k+l-2}} 
(s-t)R_{T;1,2,3}^{s,t;m} \bigr). 
\label{eq-Lm}
\end{align}
\end{lemma}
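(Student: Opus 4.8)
The plan is to settle (1) and (2) by direct manipulation and to reduce (3) to two integrations by parts combined with a partial-fraction identity. For (1), I would write $L_m=\sum_i\partial_i^2+D$ with $D=-2m\sum_{i<j}\frac{1}{x_i-x_j}(\partial_i-\partial_j)$ a first-order operator, hence a derivation satisfying $D(fg)=D(f)g+fD(g)$; the ordinary Leibniz rule $\partial_i^2(fg)=(\partial_i^2 f)g+2(\partial_i f)(\partial_i g)+f(\partial_i^2 g)$ summed over $i$ supplies precisely the cross term $2\sum_i(\partial_i f)(\partial_i g)$, and adding the two pieces gives the claim. For (2), I would integrate $\frac{d}{dt}\bigl[t^k\prod_{s}(t-x_s)^m\bigr]=kt^{k-1}\prod_s(t-x_s)^m+mt^k\sum_r(t-x_r)^{m-1}\prod_{s\neq r}(t-x_s)^m$ from $x_i$ to $x_j$; the antiderivative vanishes at both endpoints (each kills a factor $(t-x_i)^m$ or $(t-x_j)^m$, for $m\ge 1$), so the boundary term drops and rearranging yields the stated identity.

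For (3) I would first note that, since the integrand of each $R_{T;a,b}^{p;m}$ carries the complete factor $\Psi(t):=\prod_{l=1}^n(t-x_l)^m$, differentiation in $x_i$ annihilates the boundary contributions even when $i\in\{1,2,3\}$, giving the uniform formula $\partial_i R_{T;1,2}^{p;m}=-m\int_{x_1}^{x_2}\frac{t^p\Psi(t)}{t-x_i}\,dt$ (and likewise for $R_{T;1,3}^{p;m}$). Substituting these into the left side of (\ref{eq-Lm}), writing each product of integrals as a double integral over $(t,s)\in[x_1,x_2]\times[x_1,x_3]$, and relabelling $t\leftrightarrow s$ in the subtracted product, the bracket collapses to one double integral whose only $x_i$-dependence is the factor $\sum_i\frac{1}{(t-x_i)(s-x_i)}$. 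The partial-fraction identity $\sum_i\frac{1}{(t-x_i)(s-x_i)}=\frac{1}{m(t-s)}\bigl(\frac{\Psi'(s)}{\Psi(s)}-\frac{\Psi'(t)}{\Psi(t)}\bigr)$, which rests on $\sum_i\frac{1}{t-x_i}=\frac{\Psi'(t)}{m\Psi(t)}$, then cancels one factor of $m$ and turns the left side of (\ref{eq-Lm}) into $m\int_{x_1}^{x_2}\int_{x_1}^{x_3}\frac{t^ks^l-t^ls^k}{t-s}\bigl(\Psi(t)\Psi'(s)-\Psi'(t)\Psi(s)\bigr)\,ds\,dt$.

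Next I would integrate by parts once in $s$ and once in $t$ to eliminate $\Psi'$; the boundary terms vanish through the factors $(s-x_1)^m,(s-x_3)^m,(t-x_1)^m,(t-x_2)^m$, leaving $-\int_{x_1}^{x_2}\int_{x_1}^{x_3}\Psi(t)\Psi(s)(\partial_s g-\partial_t g)\,ds\,dt$ with $g(t,s)=\frac{t^ks^l-t^ls^k}{t-s}$. Because $k>l$, $g=\sum_{a=l}^{k-1}t^a s^{k+l-1-a}$ is a polynomial, and a direct computation gives $\partial_s g-\partial_t g=l\bigl(t^{k-1}s^{l-1}-t^{l-1}s^{k-1}\bigr)+\sum_{p=l}^{k-2}(k+l-2-2p)\,t^p s^{k+l-2-p}$. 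Reading off $\int_{x_1}^{x_2}t^p\Psi(t)\,dt=R_{T;1,2}^{p;m}$ and $\int_{x_1}^{x_3}s^q\Psi(s)\,ds=R_{T;1,3}^{q;m}$ turns this into a combination of products $R_{T;1,2}^{p;m}R_{T;1,3}^{q;m}$, and the last step is to substitute $R_{T;1,3}^{q;m}=R_{T;1,2}^{q;m}+R_{T;2,3}^{q;m}$ (additivity over $[x_1,x_3]=[x_1,x_2]\cup[x_2,x_3]$, as in the proof of Proposition \ref{property:n-k}(1)). The terms involving only $R_{T;1,2}^{p;m}R_{T;1,2}^{k+l-2-p;m}$ then cancel because the coefficient $k+l-2-2p$ changes sign under $p\mapsto k+l-2-p$ while the summation range is symmetric, the $l$-term reduces to $-l\,R_{T;1,2,3}^{k-1,l-1;m}$, and the remainder reorganises into $\sum_{k-2\ge s>t\ge 0,\,s+t=k+l-2}(s-t)R_{T;1,2,3}^{s,t;m}$, proving (\ref{eq-Lm}). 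I expect the main obstacle to be exactly this bookkeeping in (3): keeping the two integration variables with their different upper limits $x_2$ and $x_3$ straight through the double-integral and integration-by-parts steps, and then performing the index manipulation that trades $R_{T;1,3}$ for $R_{T;2,3}$ and exploits the antisymmetry to recover the determinantal polynomials $R_{T;1,2,3}^{s,t;m}$.
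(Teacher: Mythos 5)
Your parts (1) and (2) match the paper's own proofs (the Leibniz rule for (1); integrating the total derivative $\frac{d}{dt}\bigl[t^k\prod_{s=1}^n(t-x_s)^m\bigr]$ over $[x_i,x_j]$ for (2)), but your proof of (3) is a genuinely different and, as far as I can check, correct route. The paper proves (3) by induction on $k-l$: it computes the base cases $k-l=1,2$ directly via the substitution $t^k=t^{k-1}\{(t-x_i)+x_i\}$ combined with part (2), and then reduces $(k,l)$ to $(k-1,l+1)$, so the coefficients $(s-t)$ and the summation range emerge only from assembling the induction. You instead derive the identity in closed form: the uniform formula $\partial_i R_{T;1,2}^{p;m}=-m\int_{x_1}^{x_2}\frac{t^p\Psi(t)}{t-x_i}\,dt$ is valid for all $i$ including the endpoints because $\Psi$ vanishes there (for $m\ge 1$); the symmetrization into one double integral, the identity $\sum_i\frac{1}{t-x_i}=\frac{\Psi'(t)}{m\Psi(t)}$, the double integration by parts (which is part (2) in disguise), and the computation $\partial_s g-\partial_t g=l\bigl(t^{k-1}s^{l-1}-t^{l-1}s^{k-1}\bigr)+\sum_{p=l}^{k-2}(k+l-2-2p)t^ps^{k+l-2-p}$ for $g=\sum_{a=l}^{k-1}t^as^{k+l-1-a}$ all check out, as do the final steps: the cancellation of the $R_{T;1,2}^{p;m}R_{T;1,2}^{k+l-2-p;m}$ terms under $p\mapsto k+l-2-p$, the reduction $R_{T;1,2}^{s;m}R_{T;1,3}^{t;m}-R_{T;1,2}^{t;m}R_{T;1,3}^{s;m}=R_{T;1,2,3}^{s,t;m}$ via $R_{T;1,3}^{q;m}=R_{T;1,2}^{q;m}+R_{T;2,3}^{q;m}$, and the agreement of your range $l\le t<s\le k-2$ with the paper's $k-2\ge s>t\ge 0$ (identical given $s+t=k+l-2$). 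Your version buys transparency — the full right-hand side, coefficients and range included, appears in a single pass — while the paper's induction keeps each step elementary and never divides by $m$. On that last point, one small caveat: your partial-fraction step divides by $m$ and your uniform derivative formula fails at $m=0$ (the boundary contributions for $i\in\{1,2,3\}$ survive), so you should add the one-line separate check of (3) at $m=0$ that the paper makes via $R_{T;1,2}^{k;0}=\frac{x_2^{k+1}-x_1^{k+1}}{k+1}$; your explicit flag that (2) holds only for $m\ge 1$ is in fact a correction to the paper's statement, which as written fails for $m=0$, $k\ge 1$.
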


\begin{proof}
{\rm
(1)
It follows from Leibniz's rule. \\
(2)
It follows from following identity:
\begin{equation*}
\int_{x_i}^{x_j}\frac{\partial}{\partial t}t^{k}\prod_{s=1}^n(t-x_s)^m=0.
\end{equation*} \\
(3)
When $m=0$, it follows from $R_{T;1,2}^{k;m}=\frac{x_2^{k+1}-x_1^{k+1}}{k+1}$.
We consider the case $m\geq 1$.
 
We show this formula by induction on $k-l$. 
We define $f(t,x)=\prod_{s=1}^n(t-x_s)^m$ and
$f_i(t,x)=(t-x_i)^{m-1}\prod_{s\neq i}(t-x_s)^m$.
When $k-l=1$, l.h.s. of (\ref{eq-Lm}) is equal to
\begin{align*}
&m^2\sum_{i=1}^n\int_{x_1}^{x_2}t^kf_i(t,x) dt
\int_{x_1}^{x_3}u^{k-1}f_i(u,x) du\\
-&m^2\sum_{i=1}^n\int_{x_1}^{x_3}t^kf_i(t,x) dt
\int_{x_1}^{x_2}u^{k-1}f_i(u,x) du.
\end{align*}

So this is equal to
\begin{align*}
&m^2\sum_{i=1}^n\int_{x_1}^{x_2}t^{k-1} \{(t-x_i)+x_i \} f_i(t,x) dt
\int_{x_1}^{x_3}u^{k-1}f_i(u,x) du\\
-&m^2\sum_{i=1}^n\int_{x_1}^{x_3}t^{k-1} \{(t-x_i)+x_i \} f_i(t,x) dt
\int_{x_1}^{x_2}u^{k-1}f_i(u,x) du\\
=&m^2\sum_{i=1}^n\int_{x_1}^{x_2}t^{k-1} f(t,x) dt
\int_{x_1}^{x_3}u^{k-1} f_i(u,x) du\\
-&m^2\sum_{i=1}^n\int_{x_1}^{x_3}t^{k-1} f(t,x) dt
\int_{x_1}^{x_2}u^{k-1} f_i(u,x) du.
\end{align*}
Using (2), we have
\begin{equation*}
l.h.s.\ of \ (\ref{eq-Lm})
=-m(k-1)R_{T;1,2,3}^{k-1,k-2;m}.
\end{equation*}

We consider the case $k-l=2$. Calculating it in the same way, we have 
\begin{eqnarray*}
l.h.s.\ of\ (\ref{eq-Lm}) &=&-m(k-2)R_{T;1,2,3}^{k-1,k-3;m}\\
&&+m^2\sum_{i=1}^n\int_{x_1}^{x_2}t^{k-1} f_i(t,x) dt
\int_{x_1}^{x_3}x_iu^{k-2} f_i(u,x) du\\
&&-m^2\sum_{i=1}^n\int_{x_1}^{x_3}t^{k-1} f_i(t,x) dt
\int_{x_1}^{x_2}x_iu^{k-2} f_i(u,x) du.
\end{eqnarray*}
From $x_i=u-(u-x_i)$, we get
\begin{eqnarray*}
l.h.s.\ of\ (\ref{eq-Lm})&=&-m(k-2)R_{T;1,2,3}^{k-1,k-3;m}\\
&&+m^2\sum_{i=1}^n\int_{x_1}^{x_2}t^{k-1} f_i(t,x) dt
\int_{x_1}^{x_3}\{ u-(u-x_i) \} u^{k-2} f_i(u,x) du\\
&&-m^2\sum_{i=1}^n\int_{x_1}^{x_3}t^{k-1} f_i(t,x) dt
\int_{x_1}^{x_2} \{ u-(u-x_i) \} u^{k-2} f_i(u,x) du.
\end{eqnarray*}
It is equal to $-m(k-2)R_{T;1,2,3}^{k-1,k-3;m}$. 
Thus theorem holds when $k-l=2$.

When $k-l\geq 3$, we assume that 
the formula (\ref{eq-Lm}) holds with all numbers less than $k-l$.
Calculating l.h.s. of (\ref{eq-Lm}) in the same way, we have 
\begin{align*}
&l.h.s.\ of\ (\ref{eq-Lm}) \\
=&-mlR_{T;1,2,3}^{k-1,l-1;m}+m(k-1)R_{T;1,2,3}^{k-2,l;m}\\
+&\sum_{i=1}^n\bigl( \frac{\partial}{\partial x_i}R_{T;1,2}^{k-1;m} \bigr) 
\bigl( \frac{\partial}{\partial x_i}R_{T;1,3}^{l+1;m}\bigr)
-\bigl( \frac{\partial}{\partial x_i}R_{T;1,3}^{k-1;m} \bigr)
\bigl( \frac{\partial}{\partial x_i}R_{T;1,2}^{l+1;m}\bigr)
\end{align*}
So theorem follows by induction assumption. $\Box$
}
\end{proof}

\begin{acknowledgments}
{\rm
I would like to thank Professor Etsuro Date for introducing me to
this subject of the quasiinvariants and for his many valuable advices. 
I would also like to thank Professor Misha Feigin for his interest 
and encouragement during preparation of this paper 
and for useful comments while he was fully occupied.
}
\end{acknowledgments}

{\rm
\begin{description}
\item{Address}:
Graduate School of Information Science and Technology, 
Osaka University, Toyonaka, Osaka 560-0043, Japan

\item{E-mail}:t-tsuchida@ist.osaka-u.ac.jp
\end{description}
}

\end{document}